\newtheorem{theorem}{Theorem}[section]
\newtheorem{lemma}[theorem]{Lemma}
\newtheorem{proposition}[theorem]{Proposition}
\newtheorem{corollary}[theorem]{Corollary}
\theoremstyle{definition}
\newtheorem{definition}[theorem]{Definition}
\theoremstyle{remark}
\newtheorem{remark}[theorem]{Remark}
\numberwithin{equation}{section}
\begin{document}

\setcounter{page}{1}
\title[Convolution dominated operators]{Convolution dominated operators on compact extensions of abelian groups}
\author[G. Fendler , M. Leinert]{Gero Fendler,$^1$$^{*}$ \MakeLowercase{and}
Michael Leinert,$^2$}
\address{$^1$Finstertal 16, D-69514 Laudenbach, Germany.}
\email{\textcolor[rgb]{0.00,0.00,0.84}{gero.fendler@univie.ac.at}}
\address{$^2$Institut f\"ur Angewandte Mathematik, Universit\"at Heidelberg,
Im Neuenhei\-mer Feld 205, D-69120 Heidelberg, Germany.}
\email{\textcolor[rgb]{0.00,0.00,0.84}{leinert@math.uni-heidelberg.de}}


\let\thefootnote\relax\footnote{Copyright 2016 by the Tusi Mathematical Research Group.}
\subjclass[2010]{Primary: 47B35; Secondary: 43A20}
\keywords{Convolution dominated operators, generalised $L^1$-algebras,
sym\-met\-ric locally compact groups}

\date{Received: xxxxxx; Accepted: zzzzzz.
\newline \indent $^{*}$ Corresponding author}
\newcommand{\norm}[2][]{\| \, {#2} \,\|_{#1}}
\newcommand{\abs}[1]{|{#1}|}
\newcommand{\NN}{\ensuremath{\mathbb{N}}}
\newcommand{\CC}{\ensuremath{\mathbb{C}}}
\newcommand{\RR}{\ensuremath{\mathbb{R}}}
\newcommand{\TT}{\ensuremath{\mathbb{T}}}
\newcommand{\ZZ}{\ensuremath{\mathbb{Z}}}
\newcommand{\spr}[2][]{r_{#1}(#2)}
\newcommand{\projtensor}{\hat{\otimes}}
\newcommand{\supp}[1]{\mbox{supp}(#1)}
\newcommand{\id}{\ensuremath{1}}
\newcommand{\idg}{\ensuremath{e}}
\newcommand{\charfkt}[1]{\chi_{#1}}
\newcommand{\esssup}[2][]{\ensuremath{\mbox{ess~sup}_{#1}{#2}}}
\newcommand{\sign}{\ensuremath{\mbox{sign}}}
\begin{abstract}
If $G$ is a locally compact group, $CD(G)$ the algebra of convolution dominated operators on $L^2(G)$, then an important question is:
Is $\CC\id+CD(G)$ (or $CD(G)$ if $G$ is discrete) inverse-closed in the algebra of
bounded operators on $L^2(G)$?
\par
In this note we answer this question in the affirmative,
provided $G$ is such that one of the following properties is satisfied.
\begin{itemize}
\item[(1)]{There is a discrete, rigidly symmetric, and amenable subgroup 
$H\subset G$ and a 
(measurable) relatively compact 
neighbourhood of the identity $U$, 
invariant under conjugation by elements of $H$, such that 
$\{hU\;:\;h\in H\}$ is a partition of $G$.}
\item[(2)]{The commutator subgroup of $G$ is relatively compact.
(If $G$ is connected, this just means that $G$ is an IN group.)}
\end{itemize}
All known examples where $CD(G)$ is inverse-closed in $B(L^2(G))$ are covered
by this.
\end{abstract}
\maketitle
\section{Introduction}
For an operator on Hilbert space with an additional property,
often this property is not preserved under inversion. So there is an interest
in situations where this does not happen.
E.g. consider on $l^2(\ZZ)$ an operator as a twosided infinite matrix,
then it might have a certain  off-diagonal decay,
i.e its entries $a_{i,j}$ decay as $k=\abs{i-j}$ becomes large. 
A condition of summability like 
$\sum_k\sup\{\abs{a_{i,j}}\,:\,\abs{i-j}=k\}<\infty$ is an example.
This type of condition is preserved under inversion.
Note these operators $A$ are characterised by the condition
that there exists an $\alpha\in l^1(\ZZ)$ 
dominating the operator in the sense that
$\abs{A(\xi)(l)}\leq\sum_k\alpha(k)\abs{\xi(l-k)}$, for example
$\alpha(k)=\sup\{\abs{a_{i,j}}\,:\,\abs{i-j}=\abs{k}\}$.
With canonical operations 
the set of these operators is a Banach $\ast$-algebra.
To see that the set is closed under multiplication
one uses a Fubini type interchange of summation,
which is allowed since we have summable dominants.
An example in Gabor frame theory, where it becomes useful
to consider this class of operators on a non-abelian group,
namely a Heisenberg group with compact centre, is given in \cite{GL01}.
An example relating to mobile communication can be found in \cite{FarrStro10}.
In this note we continue the search for more general groups, where
classes of those operators are preserved under inversion.
\par
Let $G$ be a locally compact group. A bounded operator
$T$ on $L^2(G)$ is called {\em convolution dominated},
if it is dominated
by left convolution with some $L^1$-function, i.e.\ there is $f\in L^1(G)$
such that
$\abs{Tg}(x)\leq f\ast \abs{g}(x)\, \mbox{a.e.}\,\forall g\in L^2(G)$.
\par
The set $CD(G)$ of all convolution dominated operators on
$L^2(G)$ is a $\ast$-sub\-al\-ge\-bra 
of the $\ast$-algebra of all bounded operators $B(L^2(G))$.
In such a situation, an algebra $\mathcal{B}$ and a subalgebra
$\mathcal{A}\subset \mathcal{B}$ with common unit,
the question of inverse-closedness of $\mathcal{A}$ in $\mathcal{B}$
is of importance i.\ e.\ whether an element of $\mathcal{A}$ 
which is invertible in $\mathcal{B}$ must be invertible in $\mathcal{A}$, too.
Probably the first result on inverse-closedness 
is due to N.~Wiener~\cite{Wiener32}
and widely known as Wiener's Lemma:
\par
If a function on the unit circle  with absolutely summable Fourier series
has an inverse with respect to pointwise multiplication in the 
Banach algebra of continuous functions, 
then this inverse has an absolutely summable Fourier series, too.
\par
Using results of Bochner and Phillips~\cite{BP} on operator valued 
Fourier series, 
quite a few authors studied the inverse-closedness
of $CD(G)$  
in $B(L^2(G))$, for abelian discrete groups $G$ 
\cite{GoKaWo89,Bask92,Bask97,Bask97a,Sun05,fgl08}.
Using techniques from non-commutative harmonic analysis 
\cite{hul72,Lep65,Lep67,LP79}, in \cite{fgl08} we together with K.~Gr\"ochenig
treated the  case of rigidly symmetric, amenable, not necessarily abelian, 
discrete groups (which in particular includes all nilpotent discrete groups).
\par
In the case of non-discrete $G$ (here the question is about $\id+T$ in place of $T$,
since $CD(G)$ has no identity) a measurability problem arises~\cite{FGL10},
see also \cite{BelBel15a}. A path avoiding this is to restrict the question
to the algebra $CD_{reg}(G)$ of convolution dominated operators with more
regular side diagonals~\cite{FGL10,BelBel15a}.
In this note, in order to avoid
this restriction, we adopt a different approach 
combining methods of \cite{Kurb90,Kurb99,Kurb01}
with non-commutative harmonic analysis.
With similar methods Farrell and Strohmer~\cite{FarrStro10} 
looked at the generalised Heisenberg groups.
\par
We extend the positive results to the following two classes of groups.
\begin{enumerate}
\item[(1)]{There is a (measurable) relatively compact neighbourhood $U$ of the identity 
and a rigidly symmetric and amenable discrete subgroup 
$H\subset G$ with $hUh^{-1}=U$ for all $h\in H$ such that
$\{hU\}_{h\in H} $ is a partition of $G$.}
\item[(2)]{The topological commutator subgroup of $G$ is compact,
i.e.\ $G$ is a compact extension of an abelian group.
If $G$ is connected, this is equivalent to saying  
that $G$ is an IN group~\cite{Iwasawa51}.}
\end{enumerate}
Note that (1) covers nilpotent Lie groups that admit a rational structure.
The real ``ax+b'' group (for this group the convolution dominated operators are not inverse closed in $B(L^2(G))$~\cite{fl16}) shows that the 
compactness condition in (2) is needed.
Conditions (1) and (2) cover all known examples 
where $CD(G)$ is inverse-closed in $B(L^2(G))$.
We note that groups satisfying property (2) are 
amenable~\cite[Theorem 1.2.6]{green69}.
This is not so obvious in case (1).
In an appendix we show amenability of such groups
by establishing F\o lner's condition.  
\vfill
\section{Preliminaries}
Let $G$ be a locally compact group, $\mathcal{K}(G)$
the space of complex valued functions on $G$ with compact support, and
$dx$ a left Haar measure on $G$.
For a complex-valued function $f$ we denote by
$\overline{f}$ its complex conjugate. 
For a subset $V\subset G$ we denote its closure by $\overline{V}$  and
its Haar measure (provided $V$ is measurable) by $\abs{V}$.
\par
Let $U$ be a (measurable) relatively compact
neighbourhood of the identity $\idg$. 
The following Lemma is well known.
\begin{lemma}\label{lem:overlap}
If $H\subset G$ satisfies $xU\cap yU = \emptyset$ for all $x\not= y$ in $H$,
then for $z\in G$ and relatively compact $K,L \subset G$ the number of all
$h\in H$ with $hL\cap zK\not= \emptyset$ is dominated by
$ \frac{\abs{\overline{KL^{-1}U}}}{\abs{U}}$.
\end{lemma}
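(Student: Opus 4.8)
The plan is to reduce the statement to a packing argument: by hypothesis the translates $hU$, $h\in H$, are pairwise disjoint and each has the same positive Haar measure $\abs{U}$, so if one can trap all the relevant translates inside a single set of finite measure, the number of them is bounded by the ratio of the two measures.

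First I would fix $z\in G$ and relatively compact $K,L\subset G$, and put
\[
  F=\{h\in H:\ hL\cap zK\neq\emptyset\}.
\]
If $h\in F$, choose $\ell\in L$ and $k\in K$ with $h\ell=zk$; then $h=zk\ell^{-1}\in zKL^{-1}$, and hence
\[
  hU\subset zKL^{-1}U\subset z\,\overline{KL^{-1}U}.
\]
Since $K$, $L$ and $U$ are relatively compact, so is $KL^{-1}U$, whence $\overline{KL^{-1}U}$ is compact and in particular of finite Haar measure, and by left invariance $\abs{z\,\overline{KL^{-1}U}}=\abs{\overline{KL^{-1}U}}$.

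Next, for every finite subset $F_0\subset F$ the sets $\{hU:h\in F_0\}$ are pairwise disjoint (this is exactly the assumption $xU\cap yU=\emptyset$ for $x\neq y$ in $H$), so, using that $hU$ is measurable with $\abs{hU}=\abs{U}$,
\[
  \#(F_0)\,\abs{U}=\sum_{h\in F_0}\abs{hU}=\Bigl|\,\bigcup_{h\in F_0}hU\,\Bigr|\leq\abs{z\,\overline{KL^{-1}U}}=\abs{\overline{KL^{-1}U}}.
\]
Dividing by $\abs{U}>0$ (positive because $U$ is a neighbourhood of $\idg$) yields $\#(F_0)\leq\abs{\overline{KL^{-1}U}}/\abs{U}$ for every finite $F_0\subset F$, and therefore $F$ itself is finite with $\#(F)\leq\abs{\overline{KL^{-1}U}}/\abs{U}$, which is the assertion.

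The argument is little more than bookkeeping; the two points that deserve attention are that one should pass through the \emph{compact} set $\overline{KL^{-1}U}$ rather than the merely relatively compact, possibly non-measurable, product $KL^{-1}U$ in order to be sure of a finite bound, and that $H$ is not assumed countable, so the finiteness of $F$ must be deduced from the uniform estimate over all finite subsets instead of being taken for granted.
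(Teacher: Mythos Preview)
Your argument is correct and is essentially the same as the paper's: from $hL\cap zK\neq\emptyset$ one gets $h\in zKL^{-1}$, hence $hU\subset zKL^{-1}U$, and disjointness of the $hU$ together with left invariance of Haar measure yields the bound $\abs{\overline{KL^{-1}U}}/\abs{U}$. You are simply more explicit about measurability (passing to the closure) and about deducing finiteness of $F$ via finite subsets, points the paper leaves implicit.
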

\begin{proof}
If $hL$ meets $zK$, we have $h\in zKL^{-1}$, 
hence $hU\subset zKL^{-1}U$. So the number of such elements
cannot exceed 
$\frac{\abs{\overline{zKL^{-1}U}}}{\abs{hU}}=\frac{\abs{\overline{KL^{-1}U}}}{\abs{U}}$.
\end{proof}
 Let $H\subset G$ be a discrete subset
and $U$ a relatively compact neighbourhood of the identity $\idg$ 
such that $\left\{ xU \right\}_{x\in H}$ is a partition of $G$.
With this setting, we define the amalgam space
\[(L^{\infty}, l^1)=\{f\in L^1(G)\; :\; \sum_{k\in H}\norm[\infty]{f\cdot
\charfkt{kU}} < \infty\}.\]
Note that if $U$ is invariant under conjugation by elements of $H$
(this will be our standard assumption below),
then $\abs{xU}=\abs{Ux}, \mbox{ so } \Delta(x)=1 \mbox{ for } x\in H$,
and for $x\in H, u\in U$ we have 
$\Delta(xu)=\Delta(u) \leq \sup_{u\in U}\Delta(u)<\infty$, so $G$ is unimodular.
\begin{proposition}
Given the above assumptions on $U$ and $H$, including the invariance of $U$ under conjugation by elements of $H$, the amalgam space
$(L^{\infty}, l^1)$ is a dense twosided ideal in $L^1(G)$.
\end{proposition}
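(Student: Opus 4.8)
The plan is to establish the statement in three steps: $(L^{\infty},l^1)\subseteq L^1(G)$, density, and the two-sided ideal property — the last being the only one that needs real work.

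First I would record the inclusion. Since left Haar measure is left invariant, $\abs{kU}=\abs{U}$ for every $k\in H$, so for $f\in(L^{\infty},l^1)$
\[
  \norm[1]{f}=\sum_{k\in H}\int_{kU}\abs{f}\,dx
  \le\sum_{k\in H}\norm[\infty]{f\cdot\charfkt{kU}}\,\abs{kU}
  =\abs{U}\sum_{k\in H}\norm[\infty]{f\cdot\charfkt{kU}}<\infty .
\]
Thus $(L^{\infty},l^1)\subseteq L^1(G)$; it is plainly a linear subspace, and $\norm[(L^{\infty},l^1)]{f}:=\sum_{k\in H}\norm[\infty]{f\cdot\charfkt{kU}}$ is its natural norm (even a complete one, though we shall not need that). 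For density, take a bounded measurable $\phi$ with compact support $K$: Lemma~\ref{lem:overlap}, applied with $L=U$ and $z=\idg$, shows that $kU\cap K\neq\emptyset$ for only finitely many $k\in H$, and $\phi\cdot\charfkt{kU}=0$ for the rest; hence $\norm[(L^{\infty},l^1)]{\phi}\le\#\{k\in H:\;kU\cap K\neq\emptyset\}\cdot\norm[\infty]{\phi}<\infty$, so $\phi\in(L^{\infty},l^1)$. As such $\phi$ are dense in $L^1(G)$, the density of $(L^{\infty},l^1)$ follows.

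For the ideal property, fix $g\in L^1(G)$ and $f\in(L^{\infty},l^1)$ and put $f_j=f\cdot\charfkt{jU}$. I would bound $\norm[\infty]{(g*f)\cdot\charfkt{kU}}$ and sum over $k$: for a.e.\ $x\in kU$, by the triangle inequality and Tonelli,
\[
  \abs{(g*f)(x)}\le\sum_{j\in H}\int_{xU^{-1}j^{-1}}\abs{g(y)}\,\abs{f_j(y^{-1}x)}\,dy
  \le\sum_{j\in H}\norm[\infty]{f_j}\int_{xU^{-1}j^{-1}}\abs{g(y)}\,dy ,
\]
where I used that $f_j(y^{-1}x)\neq0$ forces $y\in xU^{-1}j^{-1}$. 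For $x\in kU$ one has $xU^{-1}j^{-1}\subseteq kUU^{-1}j^{-1}$, a relatively compact set independent of $x$, so summing over $k$ and interchanging the nonnegative sums gives
\[
  \norm[(L^{\infty},l^1)]{g*f}\le\sum_{j\in H}\norm[\infty]{f_j}\int\abs{g(w)}\,\#\{k\in H:\;w\in kUU^{-1}j^{-1}\}\,dw .
\]
If $w\in kUU^{-1}j^{-1}$ then $k\in wj\,UU^{-1}$, so the disjoint sets $kU$ (each of measure $\abs U$) all lie in $wj\,UU^{-1}U$; by left invariance there are at most $\abs{\overline{UU^{-1}U}}/\abs U$ of them, uniformly in $w$ and $j$. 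Hence $\norm[(L^{\infty},l^1)]{g*f}\le\bigl(\abs{\overline{UU^{-1}U}}/\abs U\bigr)\norm[1]{g}\,\norm[(L^{\infty},l^1)]{f}$ and $g*f\in(L^{\infty},l^1)$. For $f*g$ I would run the mirror-image computation, $\abs{(f*g)(x)}\le\sum_{j\in H}\norm[\infty]{f_j}\int_{jU}\abs{g(y^{-1}x)}\,dy$; the substitution $z=y^{-1}x$ — legitimate because $G$ is unimodular, as observed just before the proposition — turns the inner integral into $\int_{U^{-1}j^{-1}x}\abs{g(z)}\,dz$, and here the standing hypothesis $hU=Uh$ $(h\in H)$ lets me rewrite $U^{-1}j^{-1}kU=U^{-1}Uj^{-1}k$, so that for $x\in kU$ the integral is at most $\int_{U^{-1}Uj^{-1}k}\abs g$. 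Summing over $k$, using $hU=Uh$ once more to replace the relevant $kU$ by $Uk$, the integration variable now sits at an outer end and is removed by right invariance of Haar measure, giving the same uniform multiplicity bound; hence $f*g\in(L^{\infty},l^1)$ as well.

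The step I expect to be the main obstacle is exactly this last, non-abelian bookkeeping: one has to keep every set product that occurs ($UU^{-1}$, $wj\,UU^{-1}U$, $U^{-1}Uj^{-1}k$, $UjU^{-1}U$, \dots) relatively compact and, more delicately, see that the overlap multiplicities remain bounded \emph{uniformly in $j$ and in the integration variable}. This is precisely where the two hypotheses are used: $hU=Uh$ for $h\in H$ is what lets elements of $H$ (the ``free'' $j$, and the $k$ that reappears after the substitution) be slid past $U$ so that the Haar-measure estimates are uniform, while unimodularity both legitimises the change of variables in the $f*g$ estimate and lets translations on either side be discarded when measuring sets. Without these — for instance for the $ax+b$ group — neither the argument nor the conclusion survives. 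The remaining points (that $(L^{\infty},l^1)$ is a subspace, measurability of the auxiliary sets via passing to closures, and the application of Tonelli) are routine.
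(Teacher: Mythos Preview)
Your proof is correct. The density and inclusion steps match the paper's; the ideal argument takes a genuinely different (though closely related) route.

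The paper decomposes \emph{both} $g$ and $f$ as $\sum_{x,y\in H} g\charfkt{xU}\ast f\charfkt{yU}$, then observes that each summand is supported in $x\overline{U}y\overline{U}=xy\overline{U}^2$ (here the invariance $Uy=yU$ is used), and invokes Lemma~\ref{lem:overlap} once to bound the number of $hU$ meeting this support. You instead decompose only $f=\sum_j f_j$, take the essential supremum over $x\in kU$ directly, and count the multiplicity $\#\{k:w\in kUU^{-1}j^{-1}\}$ inline. A pleasant by-product is that your left-ideal estimate uses only the partition and left Haar invariance, not the conjugation hypothesis $hUh^{-1}=U$; the paper's support calculation $x\overline{U}y\overline{U}=xy\overline{U}^2$ needs it already for the left ideal. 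Both arguments use unimodularity (hence, indirectly, the invariance) for the right ideal, and your use of $jU=Uj$ to slide $j$ to the outside of $UjU^{-1}U$ before applying translation invariance is exactly the point where it enters. The paper's approach is a bit more structural; yours is a bit more self-contained and gives the explicit constant $\abs{\overline{UU^{-1}U}}/\abs{U}$ directly.
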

\begin{proof}
\begin{enumerate}
\item[(a)]{ $\mathcal{K}(G)$ is dense in $L^1(G)$, and
$\mathcal{K}(G)\subset (L^{\infty}, l^1)$, since for $f\in \mathcal{K}(G)$
the number of $h\in H$ with $hU\cap \supp{f}\not= \emptyset$ is at most
$ \frac{\abs{\overline{\supp{f} U^{-1}U}}}{\abs{U}}$.}
\item[(b)]{Since in $L^1$ we have $g\ast f = \sum_{x,y\in H}g\charfkt{xU}\ast f\charfkt{yU}$,
to show that $(L^{\infty}, l^1)$ is a left ideal in $L^1(G)$
it suffices to show that for $f\in (L^{\infty}, l^1), g \in  L^1(G)$, 
$x,y \in H$,
one has $g\charfkt{xU}\ast f\charfkt{yU}\in (L^{\infty}, l^1)$ with
$\norm[(L^{\infty}, l^1)]{g\charfkt{xU}\ast f\charfkt{yU}}\leq \mbox{const}\norm[1]{g\charfkt{xU}}\norm[\infty]{f\charfkt{yU}}$.
\par
Now, $\norm[\infty]{g\charfkt{xU}\ast f\charfkt{yU}}\leq\norm[1]{g\charfkt{xU}}\norm[\infty]{f\charfkt{yU}}$
and $\supp{g\charfkt{xU}\ast f\charfkt{yU}}\subset x\overline{U}y\overline{U}
= xy \overline{U}^2$. The number of all $h\in H$ with 
$hU\cap xy\overline{U}^2\not= \emptyset$
is at most $\frac{\abs{\overline{U}^2\overline{U^{-1}U}}}{\abs{\overline{U}}}=: c$ by Lemma~\ref{lem:overlap} (taking $K=y\overline{U^2}$ and $L=\overline{U}$ there).
So
\[\norm[(L^{\infty}, l^1)]{g\charfkt{xU}\ast f\charfkt{yU}}\leq c \norm[1]{g\charfkt{xU}}\norm[\infty]{f\charfkt{yU}}.\]
\item[(c)]{By the assumptions on $U$ and $H$ the group $G$ is unimodular, so
an argument like the above shows that $(L^{\infty}, l^1)$ is 
a right ideal in $L^1(G)$, too.}}
\end{enumerate}
\end{proof}
\begin{definition}\label{def:cd}
An operator $T\in B(L^2(G))$ is called convolution dominated,
if there is $f\in L^1(G)$ dominating $T$ in the sense that 
\begin{equation}
\abs{Tg}(x)\leq f\ast \abs{g}(x)\, \mbox{a.e.}\,\forall g\in L^2(G).
\end{equation}
Such an $f$ is automatically non-negative.
\par
We denote the algebra of convolution dominated operators by $CD(G)$,
or simply $CD$.
It is normed by
\begin{equation}
\norm[CD]{T}=\inf \{\norm[L^1(G)]{f}\,:\,\abs{Tg}(x)\leq f\ast \abs{g}(x)\, \mbox{a.e.}\,\forall g\in L^2(G)\},
\end{equation}
where $T\in CD$.
We  denote by  $CD_{\infty}$ the space of all 
convolution dominated operators $T$ 
on $L^2(G)$ which are dominated by convolution with some 
$f\in (L^{\infty}, l^1)$. The norm of $T\in CD_{\infty}$ is defined by
\begin{equation}\label{eq:CD-infty}
\norm[CD_{\infty}]{T}=\inf \{\norm[(L^{\infty},l^1)]{f}\,:\,\abs{Tg}(x)\leq f\ast \abs{g}(x)\, \mbox{a.e.}\,\forall g\in L^2(G)\}.
\end{equation}
\end{definition}
From Proposition 2.3 of \cite{fl16} 
we know that any convolution dominated 
operator is an integral operator with respect to a kernel.
Calling kernels equivalent if they coincide l.a.e on $G\times G$,
we have a linear bijection between the convolution dominated 
operators and the equivalence classes of kernels satisfying (\eqref{eq:cd}). 
A  kernel $t$ of such an operator $T$ satisfies 
\begin{equation}\label{eq:kernel}
T(g)(x) = \int_G t(x,y) g(y)\,dy,\;\mbox{l.a.e.},\;\forall g\in L^2(G)
\end{equation} 
and
\begin{equation}\label{eq:cd}
\abs{t(x,y)}\leq f(xy^{-1})\quad\mbox{l.a.e.}
\mbox{ for some }f\in L^1(G).
\end{equation} 
At the level of kernels the composition of convolution dominated
operators $S,T$ with respective kernels $s,t$ is given by convolution of kernels
\begin{equation}
s\ast t (x,y)= \int_G s(x,z)t(z,y)\,dz\quad\mbox{l.a.e.},
\end{equation} 
This formula makes sense because $S$ and $T$ are dominated by convolution with
integrable functions.
\par
In case that the operator $T$ is in  $CD_{\infty}$,
we can take the dominating $f$ in $(L^{\infty},l^1)$.
The argument in Remark 2.4 of \cite{fl16} 
shows that the infimum 
in (\eqref{eq:CD-infty}) is attained. Actually the function
$\sum_{i\in H}\esssup[{xy^{-1}\in iU}]{\abs{t(x,y)}}\cdot \charfkt{iU}$
does the job.
\begin{remark}
If $t$ is a kernel for $T$, $\abs{t(x,y)}\leq f(xy^{-1})\;\mbox{l.a.e.}$
for some $f\in L^1(G)$, then 
$N:=\{\,(x,y)\,|\,\abs{t(x,y)}>f(xy^{-1})\,\}$
is a local null set, so 
$t':=t\charfkt{U\times U \setminus N}$ 
is equivalent to $t$ and hence defines the same operator, and 
$t'(x,y)\leq f(xy^{-1})$ everywhere. So, replacing $t$ by $t'$,
we may replace ''l.a.e.'' by ''a.e.'' in \eqref{eq:kernel} and \eqref{eq:cd}.
\end{remark}
\begin{proposition}
$CD_{\infty}$ is a dense ideal in $CD$.
\end{proposition}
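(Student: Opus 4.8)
The plan is to establish the two assertions --- that $CD_{\infty}$ is a two-sided ideal in $CD$ and that it is dense --- separately, working throughout with the kernel description recalled above.

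\emph{Ideal property.} First I would observe that $CD_{\infty}$ is a linear subspace of $CD$: if $T_1,T_2\in CD_{\infty}$ are dominated by convolution with $g_1,g_2\in(L^{\infty},l^1)$ and $\lambda\in\CC$, then $T_1+T_2$ is dominated by $g_1+g_2$ and $\lambda T_1$ by $\abs{\lambda}g_1$, all of which lie in the linear space $(L^{\infty},l^1)$. For the ideal property, let $S\in CD$ have kernel $s$ with $\abs{s(x,y)}\le f(xy^{-1})$ a.e.\ for some $f\in L^1(G)$, and let $T\in CD_{\infty}$ have kernel $t$ with $\abs{t(x,y)}\le g(xy^{-1})$ a.e.\ for some $g\in(L^{\infty},l^1)$. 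By the composition formula recalled above the kernel of $ST$ is $s\ast t$, and
\[
\abs{s\ast t(x,y)}\le\int_G\abs{s(x,z)}\,\abs{t(z,y)}\,dz\le\int_G f(xz^{-1})g(zy^{-1})\,dz=(f\ast g)(xy^{-1}),
\]
the last equality following from the substitution $u=xz^{-1}$ together with the unimodularity of $G$ established in Section~2. Since $(L^{\infty},l^1)$ is a two-sided ideal in $L^1(G)$ by the Proposition above, $f\ast g\in(L^{\infty},l^1)$, hence $ST\in CD_{\infty}$; symmetrically the kernel $t\ast s$ of $TS$ is dominated by convolution with $g\ast f\in(L^{\infty},l^1)$, so $TS\in CD_{\infty}$. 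Thus $CD_{\infty}$ is a two-sided ideal in $CD$.

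\emph{Density.} Let $T\in CD$ be arbitrary, with kernel $t$ satisfying $\abs{t(x,y)}\le f(xy^{-1})$ a.e.\ for some $f\in L^1(G)$, $f\ge 0$. As $f$ vanishes off a $\sigma$-compact set, I would choose an increasing sequence of relatively compact measurable sets $W_n\subset G$ with $f\cdot\charfkt{G\setminus\bigcup_n W_n}=0$ a.e., and set
\[
t_n(x,y)=t(x,y)\,\charfkt{\{\abs{t}\le n\}}(x,y)\,\charfkt{W_n}(xy^{-1}).
\]
Then $\abs{t_n(x,y)}\le\min(f(xy^{-1}),n)\,\charfkt{W_n}(xy^{-1})=:g_n(xy^{-1})$, and $g_n\in(L^{\infty},l^1)$ since it is bounded by $n$, dominated by $f\in L^1(G)$, and supported in the relatively compact set $W_n$, which meets only finitely many of the $hU$ by Lemma~\ref{lem:overlap}. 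Hence $t_n$ defines an operator $T_n\in CD_{\infty}$. For the error, $t-t_n$ can be nonzero at $(x,y)$ only if $\abs{t(x,y)}>n$ or $xy^{-1}\notin W_n$; in the first case $f(xy^{-1})\ge\abs{t(x,y)}>n$, so in either case $xy^{-1}\in F_n:=\{f>n\}\cup(G\setminus W_n)$. Therefore $\abs{(t-t_n)(x,y)}\le f(xy^{-1})\,\charfkt{F_n}(xy^{-1})$, whence $\norm[CD]{T-T_n}\le\norm[1]{f\,\charfkt{F_n}}\to 0$ by dominated convergence. This proves density.

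\emph{Main obstacle.} The measurability bookkeeping (replacing ``l.a.e.'' by ``a.e.'' as in the Remark above) and the kernel substitution are routine. The point that needs a little care is in the density step: the truncation $\abs{t(x,y)}\le n$ is a condition on the pair $(x,y)$, not on $xy^{-1}$ alone, so it does not by itself yield a tail estimate in $L^1(G)$; the observation that wherever the truncation is active one has $f(xy^{-1})>n$ is precisely what converts it into the vanishing tail $\norm[1]{f\,\charfkt{F_n}}\to 0$. I expect this to be the main (and only mild) obstacle.
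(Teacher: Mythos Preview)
Your proof is correct, and the ideal argument coincides with the paper's. For density the paper takes a slightly shorter route: given $\varepsilon>0$ it picks $0\le g\in\mathcal{K}(G)$ with $\norm[1]{f-g}<\varepsilon$, defines $k_{\infty}=\sign\,k\cdot(\abs{k}\wedge M(g))$ with $M(g)(x,y)=g(xy^{-1})$, and notes that $\abs{k-k_{\infty}}\le M(\abs{f-g})$, which immediately yields $\norm[CD]{T-T_{\infty}}<\varepsilon$. Your two-parameter truncation (by magnitude $n$ and by relatively compact sets $W_n$ in the diagonal variable) arrives at the same conclusion with a little more bookkeeping; the extra cut $\{\abs{t}\le n\}$, and the ``main obstacle'' you flag in handling it, are exactly what the paper's version sidesteps by truncating against the already bounded $M(g)$.
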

\begin{proof}
\begin{enumerate}
\item[(a)]{Let $T\in CD$
with convolution kernel $k$, and $f\in L^1(G)$ be given, 
where $\abs{k(x,y)}\leq f(xy^{-1})\quad\mbox{l.a.e.}$ . 
For $\varepsilon>0$ there is some $0\leq g\in \mathcal{K}(G)$ with
$\norm[1]{f-g}< \varepsilon$. 
Let $k_{\infty} := \sign\ k \cdot (\abs{k}\wedge M(g))$\footnote{
For $z\in \CC$ let $\sign\ z = \frac{z}{\abs{z}} \mbox{ if }z\not= 0,
\mbox{ resp. } 0 \mbox{ if } z=0$, and extend this pointwise to complex valued functions.}, 
where $M(g)(x,y):=g(xy^{-1})$.
Then $\abs{k-k_{\infty}}\leq M(\abs{f-g})$. So, if $T_{\infty}$ 
is defined by $k_{\infty}$, we have $T_{\infty}\in CD_{\infty}$
and $\norm[CD]{T-T_{\infty}}\leq \norm[1]{f-g}<\varepsilon$.}
\item[(b)]{For $T\in CD, \; S\in CD_{\infty}$ with convolution kernels
$t$ and $s$, respectively, there are $f\in L^1(G), g\in (L^{\infty}, l^1)$
with $\abs{t}\leq M(f), \abs{s}\leq M(g) \mbox{ l.a.e. }$.
So 
\[\abs{t\ast s(x,y)}=\abs{\int_G t(x,z)s(z,y)\,dz}\leq\int_G f(xz^{-1})%
g(zy^{-1})\,dz=f\ast g (xy^{-1}).\]
So $TS$ is dominated by convolution with $f\ast g \in (L^{\infty}, l^1)$,
hence $TS \in CD_{\infty}$.}
\item[(c)]{Analogously we see that $CD_{\infty}$ is a right ideal in $CD$.} 
\end{enumerate}
\end{proof}
\begin{proposition}
If $G$ is nondiscrete, then $CD_{\infty}$ has no identity.
\end{proposition}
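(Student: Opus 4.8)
The plan is to show that an identity of $CD_{\infty}$, if one existed, would necessarily be the identity operator $\id$ on $L^2(G)$, and then to check that $\id$ is not even convolution dominated once $G$ is non-discrete.

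First I would suppose $E\in CD_{\infty}$ is an identity and test it against the left convolution operators $\lambda(u_{\alpha})\colon g\mapsto u_{\alpha}\ast g$, where $(u_{\alpha})$ is an approximate identity for $L^1(G)$ chosen inside $\mathcal{K}(G)$, so that $u_{\alpha}\geq 0$, $\int_G u_{\alpha}=1$, and $\supp{u_{\alpha}}$ shrinks to $\{\idg\}$. Since $u_{\alpha}\in\mathcal{K}(G)\subset (L^{\infty},l^1)$, each $\lambda(u_{\alpha})$ lies in $CD_{\infty}$, hence $E\lambda(u_{\alpha})=\lambda(u_{\alpha})$. Passing to the limit in the strong operator topology: for every $\xi\in L^2(G)$ one has $u_{\alpha}\ast\xi\to\xi$ in $L^2(G)$, so $E\lambda(u_{\alpha})\xi=E(u_{\alpha}\ast\xi)\to E\xi$ by boundedness of $E$, while $\lambda(u_{\alpha})\xi=u_{\alpha}\ast\xi\to\xi$. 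Therefore $E\xi=\xi$ for all $\xi$, i.e.\ $E=\id$, and in particular $\id\in CD_{\infty}\subset CD(G)$.

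The second part is to contradict $\id\in CD(G)$. If $\id$ were convolution dominated there would be $0\leq f\in L^1(G)$ with $\abs{\xi(x)}\leq f\ast\abs{\xi}(x)$ a.e.\ for all $\xi\in L^2(G)$; testing on $\xi=\charfkt{A}$ for a measurable set $A$ with $0<\abs{A}<\infty$ gives $1\leq f\ast\charfkt{A}(x)$ for a.e.\ $x\in A$. Since $G$ is unimodular (our standing hypotheses on $U$ and $H$ force this), one has $f\ast\charfkt{A}(x)=\int_{xA^{-1}}f$ with $\abs{xA^{-1}}=\abs{A}$; by absolute continuity of $B\mapsto\int_B f$ I would fix $\delta>0$ with $\int_B f<\tfrac12$ whenever $\abs{B}<\delta$, and since a non-discrete $G$ has non-atomic Haar measure there is such an $A$ with $\abs{A}<\delta$. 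This forces $1\leq f\ast\charfkt{A}(x)<\tfrac12$ on a set of positive measure, a contradiction; hence $CD_{\infty}$ has no identity.

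The step I expect to be the real obstacle is the first one: a priori the identity of $CD_{\infty}$ need not be the operator $\id$, and one must pin it down, here via strong convergence of an approximate identity that actually lies in $(L^{\infty},l^1)$. Everything else is routine (absolute continuity of the integral, and the existence of sets of arbitrarily small positive Haar measure on a non-discrete group). An alternative to the strong-limit step is to invoke that $CD_{\infty}$ is a dense ideal in $CD$, so its identity would also be an identity of $CD$, and then to argue directly with kernels that a ``diagonal'' kernel bounded by some $M(f)$, $f\in L^1(G)$, cannot represent $\id$; but the approximate-identity argument is shorter.
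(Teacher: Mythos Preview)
Your proof is correct, but it follows a different route from the paper's. Both arguments begin with the same observation, namely that if $E$ is an identity of $CD_{\infty}$ and $(u_{\alpha})$ is an approximate identity in $\mathcal{K}(G)$, then $E\lambda(u_{\alpha})=\lambda(u_{\alpha})$, so $E(u_{\alpha}\ast g)=u_{\alpha}\ast g$. From here you pass to the strong limit to conclude $E=\id$ and then argue directly, by testing the domination inequality on $\charfkt{A}$ for sets $A$ of small measure and invoking absolute continuity of $B\mapsto\int_B f$, that $\id\notin CD(G)$ when $G$ is non-discrete. The paper instead stays at the level of kernels: pairing $E(u_{\alpha}\ast g)=u_{\alpha}\ast g$ against test functions $f\in\mathcal{K}(G)$ shows that the kernel $e$ of $E$ satisfies $\int\!\!\int e(x,y)g(y)f(x)\,dy\,dx=0$ whenever $\supp{f}\times\supp{g}$ misses the diagonal, hence $e$ vanishes l.a.e.\ off the diagonal; since the diagonal is a local null set for non-discrete $G$, this forces $e=0$, i.e.\ $E=0$. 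Your approach has the advantage of not needing the kernel representation of convolution dominated operators (Proposition~2.3 of \cite{fl16}), at the cost of invoking unimodularity (already established under the standing hypotheses) and the non-atomicity of Haar measure; the paper's approach is more intrinsic to the kernel picture developed in the section.
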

\begin{proof}
Suppose $G$ is nondiscrete and $E$
is the identity of $CD_{\infty}$, with corresponding kernel $e$.
If $\mathcal{V}$ is the downward directed system of compact neighbourhoods
of the group identity, 
and $e_V:=\frac{1}{\abs{V}}\chi_V$ for $V\in \mathcal{V}$,
then $\{e_V\}_{V\in \mathcal{V}}$
is an approximate identity of $L^1(G)$.
Since $E\in B(L^2(G))$, we have 
$\norm[2]{E(e_V\ast g)-Eg}\to 0$ for $g\in \mathcal{K}(G)$.
Denoting $E_V$ the operator belonging to the kernel
$(x,y)\mapsto e_V(xy^{-1})$, using Fubini one has 
$E(e_V\ast g)=(EE_V)g=E_Vg$, hence 
\[\int_G\int_G e_V(xy^{-1})g(y)f(x)\,dydx\to \int_G\int_G e(x,y) g(y)f(x)\,dydx
\mbox{ for } f\in\mathcal{K}(G).\]
If $f\otimes g$ is such that its support does not meet the diagonal
$\{(x,x^{-1})\,|\, x\in G\}$, then the left hand side vanishes for 
sufficiently small $V$, so the right hand side is $0$.
This implies that $e=0$ locally a.e.\ outside the diagonal.
$G$ being nondiscrete, the diagonal is a local null set, so $e=0$ l.a.e.,
hence $E=0$ which contradicts $E(CD_{\infty})=CD_{\infty}\not= \{0\}$.  
\end{proof}
\begin{remark}\label{rem:identity}
So $CD_{\infty}(G)$ has an identity if and only if $G$ is discrete.
\end{remark}
Since $CD_{\infty}(G)$ is a dense ideal in $CD(G)$, Remark~\ref{rem:identity}
holds true for $CD(G)$ too.
\section{Matrices of operators and kernels}\label{sec:matrices}
Now we shall decompose $A\in CD_{\infty}$ and its convolution kernel 
$ a=\left( a(x,y) \right)_{x,y\in G}$
as a matrix of blocks.
Since $ L^2(G)={{\oplus_{i\in H}}} L^2(iU)$ 
(orthogonal sum of Hilbert spaces), we may divide $A$ into blocks
$A_{ij}$, where $A_{ij}\in B(L^2(jU),L^2(iU))$ is the 
restriction of $A$ 
to $L^2(jU)$ composed with the orthogonal projection onto $L^2(iU)$.
If we order the finite subsets of $H\times H$ by inclusion,
we have $ A= \left(A_{ij}\right)_{i,j \in H}$ in the sense
that the finite submatrices of $\left(A_{ij}\right)_{i,j \in H}$,
when interpreted as operators on $L^2(G)$, converge to $A$ 
in the strong operator topology.
If $ B= \left(B_{ij}\right)_{i,j \in H}$,
then $ AB = \left((AB)_{i,j}\right)$
where $\left(AB\right)_{i,j}=\sum_{k\in H}A_{ik}B_{kj}$,
since multiplication on bounded sets of operators
is strongly continuous.
So the map $ A\mapsto \left(A_{ij}\right)$ is an algebra isomorphism from $CD_{\infty}$ onto its image.
The map for the corresponding kernels reads $a\mapsto (a_{ij})$ where $a_{ij}$
is the restriction of $a$
to $iU\times jU$,  i.e.\ $a_{ij}=(a(x,y))_{(x,y)\in iU\times jU}$.
Note that $\norm[Op]{A_{ij}}\leq \abs{U}\norm[\infty]{a_{ij}}$,
where the infinity norm is taken on $iU\times jU$ 
with respect to product Haar measure.
Since for $A,B\in CD_{\infty}$ the respective kernels $a,b$ are dominated by 
$L^1$-functions, the convolution $a\ast b$ is the kernel 
corresponding to $AB$. (This is done with a Fubini argument, which is not valid
for general kernels.)
Denoting $\lambda$ the left regular representation of $G$ on $L^2(G)$
and $\lambda_i=\lambda(i),\quad i\in H$,
we define a Hilbert space isomorphism
$ S:{{\oplus_{i\in H}}} L^2(U)\to {{\oplus_{i\in H}}} L^2(iU)$
by $S((u_i)_{i\in H})=(\lambda_i u_i)_{i\in H}$.
Then $A^{\circ}_{ij}:= \lambda^{-1}_i A_{ij}\lambda_j\in B(L^2(U))$ and 
$\norm[op]{A^{\circ}_{ij}}=\norm[op]{A_{ij}}$.
We have $A^{\circ}_{ik} A^{\circ}_{kj}=\lambda^{-1}_i A_{ik}\lambda_k%
\lambda^{-1}_kA_{kj}\lambda_j=\lambda^{-1}_i A_{ik}A_{kj}\lambda_j=%
(A_{ik} A_{kj})^{\circ}$. For the kernels this reads 
$(a_{ik}\ast a_{kj})^{\circ}=a^{\circ}_{ik}\ast a^{\circ}_{kj}$, 
and $a^{\circ}_{ij}=(a(i\xi,j\eta))_{\xi,\eta\in U}$.
Thus multiplication of blocks is carried into operator composition in
$B(L^2(U))$, respectively convolution of kernels on $U\times U$.
Altogether we obtain that the map 
$ A\mapsto \left(a^{\circ}_{ij}\right)_{i,j\in H}$ 
is an algebra isomorphism onto its image in the kernel-valued
matrices with matrix multiplication, where the multiplication of entries is
convolution of kernels on $U\times U$.
If we define the involution 
$\left(a^{\circ}_{ij}\right)_{i,j\in H}^{\ast}=%
\left(a^{\circ {\ast}}_{ji}\right)_{i,j\in H}$,
where $b^{\ast}(x,y)=\overline{b(y,x)}$ for any kernel $b$ on 
$U\times U$ and $x,y\in U$, then the map $ A\mapsto \left(a^{\circ}_{ij}\right)_{i,j\in H}$ preserves the involution, too.
\begin{remark}\label{rem:decomposition}
The reader will have noticed that,
if we allow ourselves to identify the isomorphic Hilbert spaces $L^2(G)$
and $\oplus_{i\in H}L^2(U)$ and interpret the matrix
$\left(a^{\circ}_{ij}\right)$ as an operator (in the canonical way),
then $A\mapsto \left(a^{\circ}_{ij}\right)$ is the identity map, i.e.\ the
operator defined by $\left(a^{\circ}_{ij}\right)$ is the original $A$ again.
\end{remark}
For $G, H$, and $U$ as above we define different kinds of diagonals on $G\times G$.
\begin{definition}\label{def:abd}
For $k\in H$ we call 
$\{ (x,y)\in G\times G\, |\, xy^{-1} \in kU\}$ the {\em band diagonal}
determined by $k$, the set $\cup_{ij^{-1}\in kU}(iU\times jU)$
the {\em approximate block diagonal} determined by $k$, and 
$\cup_{ij^{-1} = k}(iU\times jU)$ the {\em block diagonal} determined by $k$.
\end{definition}
\begin{lemma}\label{lem:blockdiagonal}
There is $n \in \NN$ such that each approximate block diagonal
meets at most $n$ band
diagonals,
and conversely each band diagonal meets at most $n$ approximate block diagonals
(and hence can be covered by these).
\end{lemma}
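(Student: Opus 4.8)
The plan is to produce a single relatively compact set, $W:=UUU^{-1}$, and to observe that for each $k\in H$ the approximate block diagonal determined by $k$ is contained in the ``thickened band diagonal'' $\{(x,y):xy^{-1}\in kW\}$; Lemma~\ref{lem:overlap} then converts the relative compactness of $W$ into a single finite bound $n\in\NN$ that settles both halves of the statement.

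The one computation that matters is the following. Suppose $(x,y)$ lies in the approximate block diagonal determined by $k$, say $(x,y)\in iU\times jU$ with $ij^{-1}\in kU$, and write $x=iu$, $y=jv$ with $u,v\in U$. Then
\[
xy^{-1}=iuv^{-1}j^{-1}=(ij^{-1})\cdot\bigl(j(uv^{-1})j^{-1}\bigr).
\]
Because $j\in H$ and $U$ is invariant under conjugation by elements of $H$, we have $j(uv^{-1})j^{-1}=(juj^{-1})(jvj^{-1})^{-1}\in UU^{-1}$, and since $ij^{-1}\in kU$ this yields $xy^{-1}\in kU\cdot UU^{-1}=kW$. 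Read in the other direction, the same identity gives $ij^{-1}=(xy^{-1})\cdot\bigl(j(vu^{-1})j^{-1}\bigr)$ with $j(vu^{-1})j^{-1}\in UU^{-1}$, so if in addition $xy^{-1}\in lU$, i.e.\ $(x,y)$ also lies in the band diagonal determined by $l$, then $ij^{-1}\in lU\cdot UU^{-1}=lW$.

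Both assertions now reduce to one counting step. If the approximate block diagonal of $k$ meets the band diagonal of $l$, then at some point $xy^{-1}\in lU\cap kW$, so $l\overline{U}\supseteq lU$ meets the relatively compact set $kW$; by Lemma~\ref{lem:overlap} (with $z=k$, $K=W$, $L=\overline{U}$) the number of such $l$ is at most $n:=\abs{\overline{WU^{-1}U}}/\abs{U}<\infty$. Conversely, if the band diagonal of $l$ meets the approximate block diagonal of $k$, choose $i,j$ as above; then $ij^{-1}\in kU$ and $ij^{-1}\in lW$, so $k\overline{U}\supseteq kU$ meets $lW$, and Lemma~\ref{lem:overlap} (with $z=l$, $K=W$, $L=\overline{U}$, now counting the admissible $k$) again gives at most $n$. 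Finally, since $\idg\in U$ we have $k\in kU$ for every $k\in H$, so both the family of band diagonals and the family of approximate block diagonals cover $G\times G$; therefore ``meets at most $n$'' upgrades at once to ``is covered by at most $n$'', which is the parenthetical claim.

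The only delicate point is the bookkeeping of one-sided inverses: in the factorisation $xy^{-1}=(ij^{-1})\cdot\bigl(j(uv^{-1})j^{-1}\bigr)$ one must conjugate by the \emph{subgroup} element $j\in H$, where the invariance $jUj^{-1}=U$ is available, and not by the ambient point $y\in jU$. That choice of factorisation is the whole trick; once it is in place, everything else is a routine double application of Lemma~\ref{lem:overlap}.
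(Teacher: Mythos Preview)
Your argument is correct and is essentially the paper's own proof. You factor $xy^{-1}=(ij^{-1})\cdot j(uv^{-1})j^{-1}$ and use the $H$-conjugation invariance of $U$ exactly as the paper does (with your $i,j,u,v$ playing the roles of the paper's $s,t,\xi,\eta$); the resulting relation $l\in kUUU^{-1}U^{-1}=kU^2U^{-2}$, and the bound $n=\abs{\overline{U^2U^{-2}U}}/\abs{U}$ obtained from Lemma~\ref{lem:overlap}, coincide with the paper's, the only cosmetic difference being that the paper records the symmetric form $lU^2\cap kU^2\neq\emptyset$ before invoking Lemma~\ref{lem:overlap}, whereas you apply the lemma twice to the asymmetric intersections $lU\cap kW$ and $kU\cap lW$.
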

\begin{proof}
Let $x,y\in G$. There are $s,t\in H$ and $\xi,\eta\in U$ 
with $x=s\xi$ and $y=t\eta$.
If $(x,y)$ is in the band diagonal determined by $k\in H$, 
this means $xy^{-1}\in kU$ i.e.\ $s\xi\eta^{-1}t^{-1}\in kU$ or 
$st^{-1}\in k U^2U^{-1}$.
If $(x,y)$ is in the approximate block diagonal determined by $l\in H$, 
this means $st^{-1}\in lU$.
So if this approximate block diagonal meets the above band diagonal, this means 
$l\in kU^2U^{-2}$ or $lU^2 \cap kU^2\not= \emptyset$.
For fixed $l$ (resp. $k$), the number of possible such $k$ (resp. $l$)
is dominated by 
$\frac{\abs{\overline{U^2U^{-2}U}}}{\abs{U}}$ by Lemma~\ref{lem:overlap}.
\end{proof}
\begin{corollary}\label{cor:vm}
$CD_{\infty}$ 
is bicontinuously $\ast$-isomorphic
to the algebra of kernel  valued matrices
$CD_H:=\{ (a^{\circ}_{ik})_{i,k\in H}\}$
with norm\\ 
$\norm{(a^{\circ}_{ik})_{i,k \in H}}:=\sum_{l\in H}\sup_{ik^{-1}\in lU}\norm[\infty]{a^{\circ}_{ik}}$.
\end{corollary}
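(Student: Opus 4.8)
The plan is to identify both norms --- the operator norm $\norm[CD_{\infty}]{A}$ and the matrix norm $\norm{(a^{\circ}_{ik})_{i,k\in H}}$ --- as sums of essential suprema of the kernel $a$ of $A$ over the two kinds of diagonal regions of Definition~\ref{def:abd}, and then to compare these two descriptions using Lemma~\ref{lem:blockdiagonal}. That $A\mapsto(a^{\circ}_{ik})_{i,k\in H}$ is an injective $\ast$-algebra homomorphism onto its image has already been shown above, so the only tasks left are to determine this image and to prove bicontinuity.

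First I would record, for $A\in CD_{\infty}$ with kernel $a$, that by the remark following Definition~\ref{def:cd} the infimum in \eqref{eq:CD-infty} is attained, so
\[\norm[CD_{\infty}]{A}=\sum_{l\in H}\esssup[{xy^{-1}\in lU}]{\abs{a(x,y)}},\]
a sum over band diagonals of the essential supremum of $\abs{a}$ on each. For the other norm, $a^{\circ}_{ij}(\xi,\eta)=a(i\xi,j\eta)$ while left translations preserve Haar measure, so $\norm[\infty]{a^{\circ}_{ij}}$ equals the essential supremum of $\abs{a}$ over the block $iU\times jU$; since the blocks forming the approximate block diagonal $\bigcup_{ik^{-1}\in mU}(iU\times kU)$ are pairwise disjoint, $\sup_{ik^{-1}\in mU}\norm[\infty]{a^{\circ}_{ik}}$ is the essential supremum of $\abs{a}$ over that approximate block diagonal, and hence $\norm{(a^{\circ}_{ik})_{i,k\in H}}$ is the sum over approximate block diagonals of the essential supremum of $\abs{a}$ on each.

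Next I would invoke Lemma~\ref{lem:blockdiagonal}: with $n$ as there, every band diagonal lies in the union of the at most $n$ approximate block diagonals it meets, and conversely every approximate block diagonal lies in the union of at most $n$ band diagonals. Consequently the essential supremum of $\abs{a}$ over a band diagonal is dominated by the sum of its values over the $\le n$ approximate block diagonals covering it, and symmetrically; summing these inequalities over $l$, resp.\ over $m$, and interchanging the resulting finite sums --- in which, again by Lemma~\ref{lem:blockdiagonal}, each term appears at most $n$ times --- yields
\[\tfrac1n\,\norm{(a^{\circ}_{ik})_{i,k\in H}}\;\le\;\norm[CD_{\infty}]{A}\;\le\;n\,\norm{(a^{\circ}_{ik})_{i,k\in H}}.\]
Together with the $\ast$-algebra isomorphism onto the image already in hand, this makes $A\mapsto(a^{\circ}_{ik})_{i,k\in H}$ a bicontinuous $\ast$-isomorphism and shows that its image is exactly the set of kernel-valued matrices of finite norm.

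It remains to check that this image is all of $CD_H$, that is, that the map is onto. Given $(b_{ik})_{i,k\in H}$ of finite norm, define $a$ on $G\times G$ by $a(i\xi,k\eta)=b_{ik}(\xi,\eta)$ for $\xi,\eta\in U$, using the partition $\{iU\}_{i\in H}$; on the block $iU\times kU$ with $ik^{-1}\in mU$ the conjugation-invariance of $U$ forces $xy^{-1}\in mU^{2}U^{-1}$, so $\abs{a}$ is dominated a.e.\ by convolution with $f:=\sum_{m\in H}\beta_m\,\charfkt{mU^{2}U^{-1}}$, where $\beta_m:=\sup_{ik^{-1}\in mU}\norm[\infty]{b_{ik}}$. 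Since each relatively compact set $mU^{2}U^{-1}$ meets only boundedly many cosets $jU$, Lemma~\ref{lem:overlap} gives $f\in(L^{\infty},l^1)$, and by the kernel representation of convolution dominated operators (Proposition~2.3 of \cite{fl16}) $a$ is the kernel of some $A\in CD_{\infty}$ with $a^{\circ}_{ik}=b_{ik}$. I expect the main obstacle to be the bookkeeping in the norm comparison: one must check that $\norm[\infty]{a^{\circ}_{ik}}$ really records an essential supremum of $a$ over $iU\times kU$ --- so that both norms genuinely are ``sums of essential suprema over diagonal regions'' --- and that Lemma~\ref{lem:blockdiagonal} (which itself rests on Lemma~\ref{lem:overlap}) controls the passage between the two families of diagonals with only the bounded loss $n$. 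Once that is set up, the surjectivity step is routine, the sole delicate point being that the dominating function $f$ lies in $(L^{\infty},l^1)$ and not merely in $L^1$.
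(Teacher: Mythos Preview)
Your proposal is correct and follows essentially the same route as the paper: the $\ast$-algebra isomorphism onto the image is taken as already established, and Lemma~\ref{lem:blockdiagonal} is used to pass between band diagonals and approximate block diagonals so as to compare the two norms and identify the image as $CD_H$. The paper's own proof is only two sentences long and leaves all of this implicit; you have supplied exactly the details (the explicit description of both norms as sums of essential suprema over the two diagonal families, the double-counting bound via the constant $n$, and the surjectivity construction with dominating function in $(L^{\infty},l^1)$) that the paper suppresses.
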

\begin{proof}
As seen before Remark~\ref{rem:decomposition}, the map
$A\mapsto \left(a^{\circ}_{ij}\right)_{i,j\in H}$ carries the algebra 
$CD_{\infty}$ isomorphically onto its image in the kernel valued matrices. 
Lemma~\ref{lem:blockdiagonal} shows that this image is precisely $CD_H$
and that there are norm estimates both ways for this isomorphism.
\end{proof}
\begin{remark}
If $H$ is a subgroup, approximate block diagonals are block diagonals.
\end{remark}
\begin{proof}
If $i,j,k\in H$ with $ij^{-1}\in kU$, then $k$ is the only element of $H$ in 
$kU$, since $lU\cap l'U = \emptyset$ for  $l \not= l'$ in $H$.
It follows that $ij^{-1}=k$.
\end{proof}
\section{Spectrality of $CD_{\infty}$ and $CD_H$.}
\label{sec:subgroup}
Now we assume that $H$ is a (discrete) rigidly symmetric and amenable
subgroup of $G$ 
and that the Haar measure of $G$ is normalised
such that $\abs{U}=1$. Let $\mathcal{A}=l^{\infty}(H,L^{\infty}(U\times U))$
denote the space of all bounded functions $f: H\to L^{\infty}(U\times U)$
with pointwise linear operations, multiplication
$(f,g)\mapsto fg$, where $fg(h)=f(h)\ast g(h)$ 
(where $\ast$ denotes the convolution of kernels)
and involution $f\mapsto f^{\ast}$, where 
$f^{\ast}(h)(u,v)=\overline{f(h)(v,u)}$, endowed with the norm
$\norm[\mathcal{A}]{f}=\sup_{h\in H}\norm[\infty]{f(h)}$.
Then $\mathcal{A}$ is a Banach $\ast$-algebra.
\par
We denote the left regular representation
of $H$ on  $\mathcal{A}$ by $T$. So $(T_kf)(h) = f(k^{-1}h)$
for $f\in\mathcal{A}$ and $h,k \in H$.
The twisted $L^1$ algebra $\mathcal{L}=l^1(H,\mathcal{A},T)$ in the sense of Leptin
is the Banach space of all functions
$F:H\to \mathcal{A}$ with product
$F\star G(h)=\sum_{y\in H} T_yF(hy)G(y^{-1})$,
involution $F\mapsto F^{\ast}$, where $F^{\ast}(h)=T^{-1}_hF(h^{-1})^{\ast}$,
and norm $\norm{F}=\sum_{h\in H}\norm[\mathcal{A}]{F(h)}$.
\begin{theorem}\label{thm:CD}
The Banach $\ast$-algebra $CD_H$ is isometrically $\ast$-isomorphic to
$\mathcal{L}=l^1(H,l^{\infty}(H,L^{\infty}(U\times U)),T)$.
As a result $CD_{\infty}$ is bicontinuously $\ast$-isomorphic to $\mathcal{L}$.
\end{theorem}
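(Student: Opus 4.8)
The plan is to build an explicit isometric $\ast$-isomorphism $\Phi\colon CD_H\to\mathcal{L}$ and then compose it with the bicontinuous $\ast$-isomorphism $CD_\infty\cong CD_H$ of Corollary~\ref{cor:vm} to obtain the second assertion. Since $H$ is now a subgroup, the Remark at the end of Section~\ref{sec:matrices} shows that the approximate block diagonals coincide with the block diagonals, so the condition $ik^{-1}\in lU$ occurring in the norm of $CD_H$ is just $ik^{-1}=l$; the (possibly) nonzero diagonals of a matrix $(a^{\circ}_{ik})_{i,k\in H}$ are thus indexed by $l=ik^{-1}\in H$. This suggests defining, for $(a^{\circ}_{ik})\in CD_H$, the element $F=\Phi\big((a^{\circ}_{ik})\big)\in\mathcal{L}$ by
\[
F(l)(k):=a^{\circ}_{lk,\,k}\qquad(l,k\in H),
\]
equivalently $a^{\circ}_{ik}=F(ik^{-1})(k)$. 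Read backwards, this formula exhibits $\Phi$ as a linear bijection from $CD_H$ onto the set of all functions $H\times H\to L^{\infty}(U\times U)$, so at the level of linear spaces nothing more is needed once the norms are matched.

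First I would check that $\Phi$ is isometric. Unwinding the norm on $\mathcal{L}=l^1(H,\mathcal{A},T)$ gives $\norm{F}=\sum_{l\in H}\norm[\mathcal{A}]{F(l)}=\sum_{l\in H}\sup_{k\in H}\norm[\infty]{F(l)(k)}$, and re-indexing the inner supremum by $i=lk$ (so that, for fixed $l$, the pair $(i,k)$ runs over all pairs in $H\times H$ with $ik^{-1}=l$) turns this into $\sum_{l\in H}\sup_{ik^{-1}\in lU}\norm[\infty]{a^{\circ}_{ik}}$, which is exactly the norm of $CD_H$ from Corollary~\ref{cor:vm}. In particular one of the two norms is finite precisely when the other is, so $\Phi$ really maps $CD_H$ onto all of $\mathcal{L}$.

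Next I would verify multiplicativity. Writing $F=\Phi((a^{\circ}_{ik}))$ and $G=\Phi((b^{\circ}_{ik}))$, and using that the product in $\mathcal{A}=l^{\infty}(H,L^{\infty}(U\times U))$ is pointwise convolution of kernels on $U\times U$ together with $(T_kf)(h)=f(k^{-1}h)$, one computes
\[
\big(F\star G(l)\big)(k)=\sum_{y\in H}\big(T_yF(ly)\big)(k)\ast\big(G(y^{-1})\big)(k)
=\sum_{y\in H}a^{\circ}_{lk,\,y^{-1}k}\ast b^{\circ}_{y^{-1}k,\,k}.
\]
The substitution $m=y^{-1}k$, a bijection of $H$ onto itself, turns the right-hand side into $\sum_{m\in H}a^{\circ}_{lk,\,m}\ast b^{\circ}_{m,\,k}$, which is precisely the $(lk,k)$-entry of the matrix product of $(a^{\circ}_{ik})$ and $(b^{\circ}_{ik})$, i.e.\ the value at $(l,k)$ of $\Phi$ applied to that product; hence $\Phi$ is multiplicative. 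For the involution, using $F^{\ast}(l)=T_l^{-1}F(l^{-1})^{\ast}$, the definition $f^{\ast}(h)(u,v)=\overline{f(h)(v,u)}$ of the involution in $\mathcal{A}$, and $b^{\ast}(x,y)=\overline{b(y,x)}$ on $U\times U$, one gets
\[
F^{\ast}(l)(k)(u,v)=\overline{F(l^{-1})(lk)(v,u)}=\overline{a^{\circ}_{k,\,lk}(v,u)},
\]
and the right-hand side is exactly the $(l,k)$-entry of $\Phi$ applied to the matrix $\big(a^{\circ\,\ast}_{ji}\big)_{i,j\in H}$, that is, to the involution of $(a^{\circ}_{ik})$ in $CD_H$. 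Thus $\Phi$ is an isometric $\ast$-isomorphism, and composing it with the bicontinuous $\ast$-isomorphism $CD_\infty\cong CD_H$ of Corollary~\ref{cor:vm} yields the claimed bicontinuous $\ast$-isomorphism $CD_\infty\cong\mathcal{L}$.

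The only genuine work lies in the index bookkeeping of the multiplicativity step — matching Leptin's twisted convolution $F\star G(h)=\sum_{y\in H}T_yF(hy)G(y^{-1})$ for the particular (cocycle-free) action $T$ against ordinary matrix multiplication — and in the parallel verification for the two nested layers of involution. Everything else (that $\mathcal{A}$ and $\mathcal{L}$ are Banach $\ast$-algebras, that $T$ acts by isometric $\ast$-automorphisms of $\mathcal{A}$, that convolution of kernels on $U\times U$ is associative with $\norm[\infty]{a\ast b}\le\norm[\infty]{a}\,\norm[\infty]{b}$ because $\abs{U}=1$) is routine, and on the $\mathcal{L}$ side it is standard for Leptin's construction.
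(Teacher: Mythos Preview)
Your argument is correct: the map $\Phi$ you write down is precisely the inverse of the paper's map $R$, and your index computations for multiplicativity and the involution go through as stated. The difference is one of style rather than of substance. The paper builds the isomorphism in the other direction, as a $\ast$-representation $R\colon\mathcal{L}\to CD_H\subset B(L^2(G))$, by setting $R\delta^m_h=\lambda_h\circ M_m$ on $L^2(G)=\bigoplus_{i\in H}L^2(U)$ and then verifying multiplicativity and the $\ast$-property via the covariance relation $\lambda_k^{-1}M_m\lambda_k=M_{T_k^{-1}m}$, rather than by unpacking indices. What this buys the paper is that $R$ is simultaneously exhibited as a concrete $\ast$-representation of $\mathcal{L}$ on $L^2(G)$, which is exactly what is needed in the next proposition to compare $R$ with the $M$-regular representation and invoke Leptin's maximality result. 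Your purely combinatorial verification is a shade more elementary and self-contained for the theorem as stated, but if you were to continue to the spectrality results you would still have to observe (via Remark~\ref{rem:decomposition}) that the composite $CD_\infty\cong CD_H\xrightarrow{\Phi}\mathcal{L}$, read backwards, \emph{is} a representation on $L^2(G)$.
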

\begin{proof}
Like in \cite{fgl08} we define a representation $R$ of $\mathcal{L}$, 
but this time
on $L^2(G)=\bigoplus_{i\in H} L^2(U)$.
The image of $R$ will turn out to be $CD_H$.
If $\delta^m_h$ denotes the $\mathcal{A}$-valued Dirac function which takes
the value $m\in l^{\infty}(H,L^{\infty}(U\times U))$ at $h$ and vanishes on
$H\setminus\{h\}$, we set $R\delta^m_h=\lambda_h\circ M_m$, where
$M_m$ is the multiplication operator
$(\xi_i)_{i\in H}\mapsto (m(i)\ast \xi_i)_{i\in H}$,
where of course $(\xi_i)_{i\in H}\in \bigoplus_{i\in H} L^2(U)=L^2(G)$.
Then $R\delta^m_h$ coincides with the operator $T$
given by the matrix $(t_{ij})\in CD_H$ with zero entries
outside the $h$ diagonal $ij^{-1}=h$ and $t_{hj,j}=m(j)$ for $j\in H$.
To see this, it suffices to apply both operators to $(\delta_{i,k}\xi)_{i\in H}$, 
where $\xi\in L^2(U)$, $k\in H$.
We have $T(\delta_{i,k}\xi)=(\eta_i)$, where $\eta_i=\delta_{i,hk} m(k)\ast \xi$,
which equals $R\delta^m_h(\delta_{i,k}\xi)$
(note that $\lambda_h$ permutes the $L^2(U)$-blocks of $L^2(G)$,
shifting the $k$ block to the $hk$ block).
Clearly $\norm[\mathcal{L}]{\delta^m_h}=\norm[CD_H]{T}=\norm[\infty]{m}$.
Extending $R$ by linearity and continuity we obtain an isometric
linear isomorphism from $\mathcal{L}$ onto $CD_H$.
Consider $\delta^m_h$ and $\delta^n_k$ with $h,k\in H$,
$m,n\in \mathcal{A}= l^{\infty}(H,L^{\infty}(U\times U))$.
\begin{eqnarray*}
R\delta^m_h\;R\delta^n_k&=&\lambda_hM_m\lambda_kM_n\\
&=&\lambda_{hk}\lambda_k^{-1}M_m\lambda_kM_n\;=\;%
\lambda_{hk}M_{(T^{-1}_{k}m)n}\\
&=&R\delta^{(T^{-1}_{k}m)n}_{hk}\;=\;R(\delta^m_h\ast\delta^n_k).
\end{eqnarray*}
This implies that $R$ is multiplicative.
Finally 
\begin{eqnarray*}
(R\delta^m_h)^{\ast}&=& (\lambda_hM_m)^{\ast}\;=\;M_{m^{\ast}}\lambda_h^{-1}\\
&=&\lambda_h^{-1}M_{T_hm^{\ast}}\;=\;R(\delta^{T_hm^{\ast}}_{h^{-1}});=\;R((\delta^m_h)^\ast).
\end{eqnarray*}
So $R$ is a $\ast$-isomorphism from 
$\mathcal{L}=l^1(H,l^{\infty}(H,L^{\infty}(U\times U)))$
onto $CD_H$, respectively $CD_{\infty}$, which is isometric, respectively bicontinuous. At the same time $R$ is a $\ast$-representation of $\mathcal{L}$
on $L^2(G)$.
\end{proof}
\begin{lemma}\label{lem:K2}
Let $K_{2,\infty}$ be the space of (equivalence classes of) measurable kernels
on $U\times U$ with 
\[ \norm[2,\infty]{k}=\esssup[y\in U]{\norm[2]{k(\cdot,y)}<\infty}.\]
Then $L^{\infty}(U\times U)$ is a right Banach $K_{2,\infty}$ module, and
 $K_{2,\infty}$ is a left $L^2(U\times U)$ module for the convolution of kernels.
\end{lemma}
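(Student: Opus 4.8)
The plan is to verify both module structures by direct integral estimates; the only genuinely delicate point is to use the right inequality --- Cauchy--Schwarz in one place, Minkowski's integral inequality in another --- and to keep careful track of which variable each norm refers to. Throughout, the relevant product is the convolution of kernels $(p\ast q)(x,y)=\int_U p(x,z)\,q(z,y)\,dz$ on $U\times U$, and I shall freely use the normalisation $\abs{U}=1$, which gives $\norm[1]{\varphi}\leq\norm[2]{\varphi}$ for $\varphi\in L^{2}(U)$ and $\norm[2]{\varphi}\leq\norm[\infty]{\varphi}$ for $\varphi\in L^{\infty}(U)$. Measurability of the convolutions and absolute convergence of the defining integrals for almost every $(x,y)$ are routine consequences of Tonelli's theorem once the a priori bounds below are in hand, so I shall not dwell on them; likewise, completeness of $K_{2,\infty}$ is that of the vector-valued space $L^{\infty}(U;L^{2}(U))$.

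First I would record the two Banach algebra structures implicit in the statement. For $g,h\in L^{2}(U\times U)$, Cauchy--Schwarz in the integration variable $z$ gives $\abs{(g\ast h)(x,y)}\leq\norm[2]{g(x,\cdot)}\,\norm[2]{h(\cdot,y)}$; squaring and integrating successively in $x$ and in $y$ yields $\norm[2]{g\ast h}\leq\norm[2]{g}\,\norm[2]{h}$, so $L^{2}(U\times U)$ is a Banach algebra for $\ast$. For $k,l\in K_{2,\infty}$ one must resist using Cauchy--Schwarz again: that would bound $\abs{(k\ast l)(x,y)}$ by $\norm[2]{k(x,\cdot)}\,\norm[2]{l(\cdot,y)}$, which involves an $L^{2}$-norm of $k$ in its \emph{second} slot and is not controlled by $\norm[2,\infty]{k}$. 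Instead, applying Minkowski's integral inequality to $(k\ast l)(\cdot,y)=\int_U k(\cdot,z)\,l(z,y)\,dz$, with the $L^{2}$-norm taken in the first slot, gives $\norm[2]{(k\ast l)(\cdot,y)}\leq\int_U\norm[2]{k(\cdot,z)}\,\abs{l(z,y)}\,dz\leq\norm[2,\infty]{k}\,\norm[1]{l(\cdot,y)}\leq\norm[2,\infty]{k}\,\norm[2,\infty]{l}$ for almost every $y$, so $K_{2,\infty}$ is a Banach algebra for $\ast$. This is the one point where the naive estimate fails, and hence what I would flag as the crux of the argument.

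It then remains to check the two actions. For (1), let $f\in L^{\infty}(U\times U)$ and $k\in K_{2,\infty}$; Cauchy--Schwarz in $z$ gives, for almost every $(x,y)$, $\abs{(f\ast k)(x,y)}\leq\norm[2]{f(x,\cdot)}\,\norm[2]{k(\cdot,y)}\leq\norm[\infty]{f}\,\norm[2,\infty]{k}$, so $f\ast k\in L^{\infty}(U\times U)$ with $\norm[\infty]{f\ast k}\leq\norm[\infty]{f}\,\norm[2,\infty]{k}$, and the identity $(f\ast k)\ast l=f\ast(k\ast l)$ for $k,l\in K_{2,\infty}$ follows from Fubini's theorem, the dominating integral $\int_U\int_U\abs{f(x,z)}\,\abs{k(z,w)}\,\abs{l(w,y)}\,dz\,dw$ being at most $\norm[\infty]{f}\,\norm[2,\infty]{k}\,\norm[2,\infty]{l}$ for almost every $y$; hence $L^{\infty}(U\times U)$ is a right Banach $K_{2,\infty}$-module. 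For (2), let $g\in L^{2}(U\times U)$ and $k\in K_{2,\infty}$; the same estimate gives $\abs{(g\ast k)(x,y)}\leq\norm[2]{g(x,\cdot)}\,\norm[2,\infty]{k}$ for almost every $y$, whence $\norm[2]{(g\ast k)(\cdot,y)}^{2}\leq\norm[2,\infty]{k}^{2}\int_U\norm[2]{g(x,\cdot)}^{2}\,dx=\norm[2,\infty]{k}^{2}\,\norm[2]{g}^{2}$, i.e.\ $\norm[2,\infty]{g\ast k}\leq\norm[2]{g}\,\norm[2,\infty]{k}$; associativity $(g\ast g')\ast k=g\ast(g'\ast k)$ again follows from Fubini via a finite dominating integral, so $K_{2,\infty}$ is a left $L^{2}(U\times U)$-module.
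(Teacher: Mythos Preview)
Your proof is correct and follows essentially the same route as the paper: direct pointwise estimates via Cauchy--Schwarz (or H\"older) in the integration variable, combined with the normalisation $\abs{U}=1$, to get $\norm[\infty]{f\ast k}\leq\norm[\infty]{f}\,\norm[2,\infty]{k}$ and $\norm[2,\infty]{g\ast k}\leq\norm[2]{g}\,\norm[2,\infty]{k}$. The paper uses the $L^{\infty}$--$L^{1}$ pairing and then $\norm[1]{k(\cdot,s)}\leq\norm[2]{k(\cdot,s)}$ in part~(a) where you use Cauchy--Schwarz and $\norm[2]{f(x,\cdot)}\leq\norm[\infty]{f}$, but this is an immaterial difference. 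Your additional verification that $K_{2,\infty}$ itself is a Banach algebra (via Minkowski's integral inequality) and the associativity checks are not in the paper's proof of the lemma; the paper instead deduces the algebra structure in the remark immediately following, by combining the module estimate of part~(b) with the inclusion $K_{2,\infty}\subset L^{2}(U\times U)$ coming from $\norm[2]{\,\cdot\,}\leq\norm[2,\infty]{\,\cdot\,}$---a shorter route than Minkowski, though yours is equally valid.
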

\begin{proof}
In order to avoid additional constants coming up, we normalise
Haar measure on $G$ so that $\abs{U}=1$.
\par
\begin{enumerate}
\item[(a)]{Let $g\in L^{\infty}(U\times U)$ and $k\in K_{2,\infty}$. Since
\begin{eqnarray*}
\abs{g\ast k\,(r,s)}&=&\abs{\int_U g(r,t)k(t,s)\,ds}\;\leq
\;\norm[\infty]{g(r.\cdot )} \norm[1]{k(\cdot,s)}\\
&\leq& \norm[\infty]{g}\norm[2]{k(\cdot,s)}\;\leq\;
\norm[\infty]{g}\norm[2,\infty]{k}\quad \mbox{ a.e.\ },
\end{eqnarray*}
we have
\begin{equation*}
\norm[\infty]{g\ast k} \leq \norm[\infty]{g}\norm[2,\infty]{k}.
\end{equation*}}
\item[(b)]{Let $h\in L^2(U\times U)$ and $k\in K_{2,\infty}$. Then
\begin{eqnarray*}
\abs{h\ast k(r,s)}&\leq& \norm[2]{h(r,\cdot)}\norm[2]{k(\cdot,s)}\;\leq\;
\norm[2]{h(r,\cdot)}\norm[\infty,2]{k}\quad\mbox{a.e.\ },
\end{eqnarray*}
so
\begin{eqnarray*}
\norm[2]{h\ast k(\cdot,s)}^2&\leq& \int_U\int_U \abs{h(r,u)}^2\,du\,dr
\norm[2,\infty]{k}^2\;\leq\;\norm[2]{h}^2\norm[2,\infty]{k}^2
\end{eqnarray*}
Hence
\begin{equation*}
\norm[2,\infty]{h\ast k}\leq\norm[2]{h}\norm[2,\infty]{k}
\end{equation*}
}
\end{enumerate}
\end{proof}
\begin{definition}
A subalgebra $\mathcal{A}$ of an algebra $\mathcal{B}$ is called 
a spectral subalgebra of $\mathcal{B}$ 
or spectral in $\mathcal{B}$, if for every $a \in \mathcal{A}$
 the spectrum of $a$ in $ \mathcal{A}$ coincides 
with its spectrum in $\mathcal{B}$ except perhaps for the value zero. 
(Without the removal of zero the notion would be equivalent to 
inverse-closedness).
\end{definition}
In the following remark for an element $a$ of a Banach algebra $\mathcal{A}$
we denote
its spectral radius by $\spr{a}$.
\begin{remark}
\begin{enumerate}
\item[]
\item[(a)]
$L^{\infty}(U\times U)$ and the Hilbert--Schmidt kernels $L^2(U\times U)$
are Banach algebras for convolution, and so is $K_{2,\infty}$, since 
$\norm[2,\infty]{~}\geq \norm[2]{~}$ on $K_{2,\infty}$.\\
\item[(b)]
Since in general one sided ideals are spectral subalgebras,
$(L^{\infty}(U\times U),\ast)$ is spectral in $K_{2,\infty}$
which is spectral in the algebra of Hilbert--Schmidt operators on $L^2(U)$,
which in turn is spectral  in $B(L^2(U))$.
So $(L^{\infty}(U\times U),\ast)$
is spectral in $B(L^2(U))$.\\
\item[(c)]
The closure of $L^{\infty}(U\times U)$
in $B(L^2(U))$ is its $C^{\ast}$-hull $C^{\ast}(L^{\infty}(U\times U))$:
For $a\in L^{\infty}(U\times U)$ and the operator $A$ defined by it, one has 
$\norm{A}=\spr{A^{\ast}A}^{\frac{1}{2}}=\spr{a^{\ast}a}^{\frac{1}{2}}$ by (b).
Since $\norm{\pi(a)}\leq\spr{a^{\ast}a}^{\frac{1}{2}}$ for any
Hilbert space $\ast$-representation of $L^{\infty}(U\times U)$,
the norm $a\mapsto \norm{A}$ is the greatest $C^{\ast}$-seminorm on it.
Hence the $C^{\ast}$-hull of $L^{\infty}(U\times U)$ is 
its closure in $B(L^2(U))$.\\
\item[(d)]
The argument in (c) 
shows that if $\mathcal{A}$ is a Banach $\ast$-algebra contained in a $C^{\ast}$-algebra $\mathcal{B}$ and $\mathcal{A}$
is spectral in $\mathcal{B}$, then the closure of $\mathcal{A}$ in
$\mathcal{B}$ is the $C^{\ast}$-hull $C^{\ast}(\mathcal{A})$.
(Without the spectrality assumption this is false. Consider
$L^1(G)\subset B(L^2(G))$ for a non-amenable group $G$.)
\end{enumerate}
\end{remark}
The set $\mathcal{C} = l^{\infty}(H,B(L^2(U))$ with 
pointwise operations and involution 
(where multiplication in $B(L^2(U))$ is composition) is a $C^{\ast}$-algebra. 
The left regular representation of $H$ on $\mathcal{C}$ is denoted by $T$. 
So $T_kf(h) = f(k^{-1} h)$ for $f$ in $\mathcal{C}$ and $h,k \in H$. 
In analogy to the beginning of this section we consider the twisted 
$L^1$-algebra $\mathcal{B} = l^1(H,l^{\infty}(H,B(L^2(U)),T)$ and define the 
$\ast$-representation $R$ of $\mathcal{B}$ like 
for $\mathcal{L}$ in the proof of 
Theorem~\ref{thm:CD}.
\begin{proposition} 
\begin{enumerate}
\item[]
\item[(a)]
The closure of $R(\mathcal{B})$ in $B(L^2(G))$ is  its
 $C^{\ast}$-hull $C^{\ast}(\mathcal{B})$.\\
\item[(b)]
$R(\mathcal{B})$ is spectral in $B(L^2(G))$.\\
\item[(c)]
$CD_{\infty}(G)$ is spectral in $B(L^2(G))$.
In particular, $CD_{\infty}(G)$ is a symmetric
Banach $\ast$-algebra.\\
\end{enumerate}
\end{proposition}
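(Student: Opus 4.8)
The plan is to establish (a), (b), (c) in this order, re-expressing everything through the twisted $L^1$-algebra picture of Section~\ref{sec:subgroup} and through crossed-product $C^*$-algebras: amenability of $H$ drives (a), rigid symmetry of $H$ drives (b), and the chain of one-sided ideals implicit in Lemma~\ref{lem:K2} drives (c). Write $\mathcal C=l^\infty(H,B(L^2(U)))$, so that $\mathcal B=l^1(H,\mathcal C,T)$ is the twisted $L^1$-algebra over the $C^*$-algebra $\mathcal C$, and recall from Theorem~\ref{thm:CD} (with Corollary~\ref{cor:vm} and Remark~\ref{rem:decomposition}) that $R$ restricts to a bicontinuous $\ast$-isomorphism of $\mathcal L:=l^1(H,l^\infty(H,L^\infty(U\times U)),T)\subseteq\mathcal B$ onto $CD_\infty(G)=CD_H\subseteq B(L^2(G))$.

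For (a): as $H$ is discrete and $\mathcal C$ is a $C^*$-algebra, $C^*(\mathcal B)$ is the full crossed product $\mathcal C\rtimes_T H$. The representation $R$ from the proof of Theorem~\ref{thm:CD} is, on $L^2(G)=\bigoplus_{i\in H}L^2(U)$, exactly the regular representation of $\mathcal C\rtimes_T H$: the copy of $\mathcal C$ acts diagonally by $m\mapsto M_m=(m(i))_{i\in H}$, which is a faithful representation of $\mathcal C$, while $H$ acts through the block permutations $\lambda_h$, i.e.\ through (a multiple of) its left regular representation. By the standard criterion for regular representations of crossed products, $R$ is then faithful on the reduced crossed product $\mathcal C\rtimes_{T,r}H$; and since $H$ is amenable, reduced and full crossed products coincide. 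Hence $\overline{R(\mathcal B)}=\mathcal C\rtimes_{T,r}H=\mathcal C\rtimes_T H=C^*(\mathcal B)$, which is (a).

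For (b): by (a) the closure $\overline{R(\mathcal B)}$ is a $C^*$-subalgebra of $B(L^2(G))$ and so is inverse-closed, in particular spectral, there (spectral permanence for $C^*$-algebras); it therefore suffices to see that $R(\mathcal B)$ is spectral in $C^*(\mathcal B)$. For this I would invoke rigid symmetry of $H$: this gives that the twisted $L^1$-algebra $\mathcal B=l^1(H,\mathcal C,T)$ over the $C^*$-algebra $\mathcal C$ is a symmetric Banach $\ast$-algebra (cf.\ \cite{LP79,fgl08}), and a symmetric Banach $\ast$-algebra is always spectral in its enveloping $C^*$-algebra. Since ``spectral subalgebra'' is a transitive relation, (b) follows; in particular $R(\mathcal B)$, being spectral in the $C^*$-algebra $B(L^2(G))$, is itself a symmetric Banach $\ast$-algebra. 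The point that needs the most care here is securing a symmetry statement for \emph{twisted} $L^1$-algebras over a rigidly symmetric group, not just for ordinary $L^1$-algebras.

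For (c): by the identification recalled above it suffices, using (b) and transitivity once more, to prove that $\mathcal L$ is spectral in $\mathcal B$. The crucial observation is that Lemma~\ref{lem:K2} presents $L^\infty(U\times U)$ as a right ideal in $K_{2,\infty}$, $K_{2,\infty}$ as a left ideal in the Hilbert--Schmidt algebra $L^2(U\times U)$, and $L^2(U\times U)$ as a two-sided ideal in $B(L^2(U))$, all with contractive module actions. Forming $l^1(H,l^\infty(H,\,\cdot\,),T)$ of the tower $L^\infty(U\times U)\subseteq K_{2,\infty}\subseteq L^2(U\times U)\subseteq B(L^2(U))$ preserves these one-sided ideal relations: the passage to $l^\infty(H,\,\cdot\,)$ with pointwise operations is immediate, and the passage to $l^1(H,\,\cdot\,,T)$ is read off the product formula $F\star G(h)=\sum_{y\in H}T_yF(hy)G(y^{-1})$, using that $T$ merely permutes the $H$-coordinate. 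This yields a tower $\mathcal L\subseteq\mathcal L_1\subseteq\mathcal L_2\subseteq\mathcal B$ (with $\mathcal L_1=l^1(H,l^\infty(H,K_{2,\infty}),T)$ and $\mathcal L_2=l^1(H,l^\infty(H,L^2(U\times U)),T)$) in which each term is a one-sided ideal, hence a spectral subalgebra, of the next; by transitivity $\mathcal L$ is spectral in $\mathcal B$, as desired. The symmetry of $CD_\infty(G)$ is then immediate: for $S\in CD_\infty(G)$ the spectrum of $S^\ast S$ in $CD_\infty(G)$ agrees, off $0$, with its spectrum in the $C^*$-algebra $B(L^2(G))$, which lies in $[0,\infty)$; hence the spectrum of $S^\ast S$ in $CD_\infty(G)$ lies in $[0,\infty)$ as well.
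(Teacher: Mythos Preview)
Your overall strategy matches the paper's: amenability for (a), rigid symmetry for (b), and the one-sided ideal chain from Lemma~\ref{lem:K2} for (c); part (c) is essentially identical to the paper's argument. Two places deserve tightening. In (a), the covariant pair $(M,\lambda)$ on $l^2(H,L^2(U))$ is \emph{not} the regular representation induced from the faithful $M$---that induced representation lives on $l^2(H)\otimes l^2(H,L^2(U))$---so having $M$ faithful and $\lambda$ a copy of the left regular representation of $H$ is not by itself a ``standard criterion'' guaranteeing faithfulness on $\mathcal{C}\rtimes_{T,r}H$. The paper fills exactly this gap: it amplifies $R$ to $R^{\omega}$ on $l^2(H\times H,L^2(U))$ and produces an explicit unitary $S$ intertwining $R^{\omega}$ with the $M$-regular representation $\lambda^{M}$; since amplification preserves norms, $R$ then computes the reduced (hence, by amenability, the full) crossed-product norm. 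In (b), rather than quoting symmetry of twisted $L^1$-algebras over rigidly symmetric groups as a black box, the paper makes the mechanism explicit: each block diagonal of $R(f)$ has operator norm $\sup_j\norm[B(L^2(U))]{f(h)(j)}$, so $f\mapsto R(f)$ embeds $\mathcal{B}$ isometrically as a closed $\ast$-subalgebra of the \emph{untwisted} algebra $l^1(H,B(L^2(G)))\cong l^1(H)\projtensor B(L^2(G))$, which is symmetric by the definition of rigid symmetry; this is precisely the argument underlying the reference you cite, so your version is correct but one step less self-contained.
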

\begin{proof}
Like in the proof of Theorem~\ref{thm:CD} we let 
\[M:l^{\infty}(H,B(L^2(U)))\to B(l^2(H,L^2(U)))=B(L^2(G))\]
be the $\ast$-representation $c\mapsto M_c$, where
$M_c(\xi_i)_{i\in H}=(c(i)\xi_i)_{i\in H}, \;\xi\in l^2(H,L^2(U))$.
Since $H$ is amenable and $l^{\infty}(H,B(L^2(U)))$ is
a $C^{\ast}$ algebra, by Leptin \cite{lep68} the
$M$-regular representation is a maximal representation of $\mathcal{B}$.
It is weakly equivalent to $R$.
\par
To see this one slightly modifies the proof of \cite[Prop 3]{fgl08}. 
Since $R$ happens on $L^2(G)=l^2(H,L^2(U))$ and 
$\lambda^M$ happens on $l^2(H,L^2(G))=l^2(H\times H,L^2(U))$,
we work on this last space, which we denote $\mathcal{H}$ for short.
Like in \cite{fgl08} let $R^{\omega}$ denote the extension of $R$
from $L^2(G)$ to $\mathcal{H}$ defined by letting the operator
$R(f)=\sum_{y\in H}\lambda(y)\circ M_{f(y)}$, where $f\in \mathcal{L}$,
act on the first coordinate only.
That is
\begin{equation*}
R^{\omega}(f)\xi(x,z)=\sum_{y\in H}f(y)(y^{-1}x)\xi(y^{-1}x,z) 
\mbox{ for } \xi\in \mathcal{H},\; x,z\in H.
\end{equation*}
The operator $S:\mathcal{H}\to \mathcal{H}$
defined by $S\xi(x,z)=\xi(xz,z)$ is unitary and intertwines
$R^{\omega}$ and $\lambda^M$ as in \cite{fgl08}.
\par
So $R^{\omega}$, and in turn $R$ are  maximal representations of $\mathcal{L}$..
Thus the closure of $R(\mathcal{B})$ in $B(L^2(G))$ is $C^{\ast}(\mathcal{B})$.
This proves (a).
\par
For $f\in\mathcal{B}$ the operator $R(f)$ can be viewed
as an $l^1$-sum of its diagonals.
Each diagonal (with zero entries outside the diagonal)
is a bounded operator on $L^2(G)$, and its operator norm is the supremum
of the
$B(L^2(G))$-norms of its entries. 
So the $\ast$-isomorphism $f\mapsto R(f)$ maps $\mathcal{B}$
isometrically into $l^1(H,B(L^2(G))$. In particular,
$R(\mathcal{B})$ is a complete and hence closed $\ast$-subalgebra of 
$l^1(H,B(L^2(G)))$ $\tilde=l^1(H)\projtensor B(L^2(G))$. The latter
is symmetric, since $H$ is rigidly symmetric. So $R(\mathcal{B})$
is symmetric, too.
By \cite{fgl08} and (a), $R(\mathcal{B})$ is spectral in $B(L^2(G))$.
This proves (b).
\par
By Lemma \ref{lem:K2}, $\mathcal{L}$
is a right ideal in $l^1(H,l^{\infty}(H,K_{2,\infty}))$, which is a left
ideal in $l^1(H,l^{\infty}(H,L^2(U\times U)))$, which is a twosided
ideal in $\mathcal{B}\tilde=R(\mathcal{B})$ which by (b) is spectral
in $B(L^2(G)))$. So $CD_{\infty}(G)\tilde=R(\mathcal{L})\tilde=\mathcal{L}$
is spectral in $B(L^2(G))$.
\end{proof}
Let $\id$ denote the identity operator in $B(L^2(G))$.
\begin{corollary}\label{cor:invcl}
$\CC \id + CD_{\infty}(G)$ is inverse-closed in $B(L^2(G))$.
\end{corollary}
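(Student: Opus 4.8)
The plan is to derive Corollary~\ref{cor:invcl} as a direct consequence of the preceding Proposition, specifically of part~(c) which asserts that $CD_{\infty}(G)$ is spectral in $B(L^2(G))$. Recall that a subalgebra $\mathcal{A}$ being spectral in $\mathcal{B}$ means that the spectrum of each $a\in\mathcal{A}$ computed in $\mathcal{A}$ agrees with that computed in $\mathcal{B}$ up to the possible exception of the point $0$; the role of adjoining the unit $\CC\id$ is precisely to turn this ``spectral'' statement into genuine inverse-closedness, since the obstruction at $0$ disappears once an identity is available.

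Concretely, I would argue as follows. Let $T\in CD_{\infty}(G)$ and suppose $\id + T$ is invertible in $B(L^2(G))$; I must show its inverse lies in $\CC\id + CD_{\infty}(G)$. Invertibility of $\id+T$ in $B(L^2(G))$ says that $-1$ does not belong to $\sigma_{B(L^2(G))}(T)$. Since $-1\neq 0$, the spectral subalgebra property of Proposition~(c) gives $-1\notin\sigma_{CD_{\infty}(G)}(T)$, where the spectrum in the non-unital algebra $CD_{\infty}(G)$ is understood via its unitisation; equivalently, $\id+T$ is invertible in $\CC\id + CD_{\infty}(G)$. Writing $(\id+T)^{-1} = \id + S$ with $S\in CD_{\infty}(G)$, we obtain the desired conclusion. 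One should note that $\CC\id+CD_{\infty}(G)$ is indeed a subalgebra of $B(L^2(G))$: it is the unitisation of the ideal $CD_{\infty}(G)$, and by Remark~\ref{rem:identity} (and the discussion following it) $\id\notin CD_{\infty}(G)$ when $G$ is nondiscrete, so this unitisation is genuine, while if $G$ is discrete then $CD_{\infty}(G)$ already contains $\id$ and the statement reduces to inverse-closedness of $CD_{\infty}(G)$ itself, which again follows from spectrality.

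I do not expect any serious obstacle here: the work has all been done in establishing that $CD_{\infty}(G)$ is a spectral (indeed ideal-chain-to-spectral) $\ast$-subalgebra of $B(L^2(G))$. The only point requiring a little care is the bookkeeping around non-unitality---translating ``spectral subalgebra, exception at $0$'' for a non-unital algebra into ``inverse-closed after adjoining $\CC\id$''---but this is the standard and well-known relationship between the two notions, already flagged in the Definition preceding the Remark on spectral radius. Thus the proof is essentially one line invoking Proposition~(c).

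\begin{proof}
By part~(c) of the preceding Proposition, $CD_{\infty}(G)$ is spectral in $B(L^2(G))$. Let $T\in CD_{\infty}(G)$ and assume $\id+T$ is invertible in $B(L^2(G))$, i.e.\ $-1\notin\sigma_{B(L^2(G))}(T)$. Since $-1\neq 0$, spectrality yields $-1\notin\sigma(T)$ computed in (the unitisation of) $CD_{\infty}(G)$; equivalently $\id+T$ is invertible in $\CC\id+CD_{\infty}(G)$, so $(\id+T)^{-1}\in\CC\id+CD_{\infty}(G)$. As every element of $\CC\id+CD_{\infty}(G)$ has the form $\mu\id+T$ with $\mu\in\CC$, $T\in CD_{\infty}(G)$, and the case $\mu\neq 0$ reduces to the above after dividing by $\mu$ (while for $\mu=0$ an element of $CD_{\infty}(G)$ invertible in $B(L^2(G))$ has, by spectrality, $0$ outside its spectrum in the unitisation, hence its inverse lies in $\CC\id+CD_{\infty}(G)$), the algebra $\CC\id+CD_{\infty}(G)$ is inverse-closed in $B(L^2(G))$.
\end{proof}
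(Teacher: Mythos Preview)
Your overall approach coincides with the paper's: both simply invoke part~(c) of the preceding Proposition (spectrality of $CD_\infty(G)$ in $B(L^2(G))$) and pass to the unitisation. The paper splits off the discrete case by citing \cite{fgl08}, while you handle it in one stroke; this is a cosmetic difference.

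There is, however, a genuine slip in your treatment of the case $\mu=0$. You write that a $T\in CD_\infty(G)$ invertible in $B(L^2(G))$ ``has, by spectrality, $0$ outside its spectrum in the unitisation''. Spectrality, by its very definition, says nothing about the point~$0$; indeed, when $G$ is non-discrete $CD_\infty(G)$ has no identity (Remark~\ref{rem:identity}), so $0$ lies in the unitisation-spectrum of \emph{every} element, and no $T\in CD_\infty(G)$ can have its inverse in $\CC\id+CD_\infty(G)$ (since $T(\mu\id+S)=\mu T+TS\in CD_\infty(G)\not\ni\id$). Thus if such a $T$ existed, inverse-closedness would actually \emph{fail}. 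The correct way to dispose of this case is to observe that it cannot occur: via the isomorphism of Theorem~\ref{thm:CD} and the ideal chain in the proof of Proposition~(c), $CD_\infty(G)\cong\mathcal{L}$ sits inside the proper two-sided ideal $l^1(H,l^\infty(H,L^2(U\times U)))$ of the \emph{unital} algebra $\mathcal{B}\cong R(\mathcal{B})$ (unital because $\id_{L^2(U)}$ gives a unit, but $\id_{L^2(U)}$ is not Hilbert--Schmidt when $G$ is non-discrete, so the ideal is proper). Since $R(\mathcal{B})$ is inverse-closed in $B(L^2(G))$ by part~(b), no element of this ideal can be invertible in $B(L^2(G))$. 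The paper's own proof just says ``So the assertion follows'' without spelling this out, so you are in good company---but the sentence you wrote for $\mu=0$ is not correct as it stands.
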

\begin{proof}
If $G$ is discrete,  this is contained in~\cite{fgl08}
($\CC\id$ may be omitted here, since $\id\in CD_{\infty}=CD$).
Hence we may assume that $G$ is non-discrete. Then $CD_{\infty}(G)$
has no identity and is spectral in $B(L^2(G))$ which has the identity $\id$.
So the assertion follows.
\end{proof}
\begin{theorem}\label{thm:invcl}
$\CC \id + CD(G)$ is inverse-closed in $B(L^2(G))$.
\end{theorem}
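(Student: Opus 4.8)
The plan is to deduce the theorem from Corollary~\ref{cor:invcl} together with the fact that $CD_\infty(G)$ is a dense two‑sided ideal of $CD(G)$. Since for discrete $G$ the two algebras coincide and the statement is already known, assume $G$ non‑discrete and norm $\CC\id+CD(G)$ by $|\lambda|+\norm[CD]{T}$; this is a Banach $\ast$‑algebra norm, $\norm[op]{\cdot}\le\norm[CD]{\cdot}$ on it, and $CD_\infty(G)$ is a dense two‑sided ideal of $CD(G)$ (hence of $\CC\id+CD(G)$). I claim it suffices to prove that $CD(G)$ is a \emph{symmetric} Banach $\ast$‑algebra. Indeed, by Pt\'ak's theorem the Pt\'ak function $a\mapsto r(a^{\ast}a)^{1/2}$ (spectral radius in $\CC\id+CD(G)$) is then the largest $C^{\ast}$‑seminorm on $\CC\id+CD(G)$; since $CD_\infty(G)$ is dense in $CD(G)$ and, by the results preceding Corollary~\ref{cor:invcl}, the operator‑norm closure of $CD_\infty(G)$ is its enveloping $C^{\ast}$‑algebra, the largest $C^{\ast}$‑seminorm on $\CC\id+CD(G)$ is $\norm[op]{\cdot}$, so the operator‑norm closure $\overline{CD(G)}$ is the enveloping $C^{\ast}$‑algebra of $CD(G)$. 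A symmetric Banach $\ast$‑algebra is inverse‑closed in its enveloping $C^{\ast}$‑algebra, so $\CC\id+CD(G)$ is inverse‑closed in the unital $C^{\ast}$‑subalgebra $\CC\id+\overline{CD(G)}$ of $B(L^2(G))$, which is itself inverse‑closed in $B(L^2(G))$; the theorem then follows by transitivity of inverse‑closedness.

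For the symmetry one must show $\sigma_{\CC\id+CD(G)}(h)\subset\RR$ for every self‑adjoint $h\in CD(G)$. Fix such an $h$ and $z\in\CC\setminus\RR$; I want $z\id-h$ invertible in $\CC\id+CD(G)$. Choose a self‑adjoint $h'\in CD_\infty(G)$ with $\norm[CD]{h-h'}$ small. Since $CD_\infty(G)$ is symmetric (equivalently, by Corollary~\ref{cor:invcl}, $\sigma_{\CC\id+CD_\infty(G)}(h')=\sigma_{B(L^2(G))}(h')\subset\RR$), the element $z\id-h'$ is invertible in $\CC\id+CD_\infty(G)$. Writing $z\id-h=(z\id-h')\bigl(\id+(z\id-h')^{-1}(h'-h)\bigr)$ and using that $CD_\infty(G)$ is an ideal in $\CC\id+CD(G)$, the correction term $(z\id-h')^{-1}(h'-h)$ lies in $CD(G)$; if its $CD$‑norm is $<1$, a Neumann series convergent in $\norm[CD]{\cdot}$ inverts $\id+(z\id-h')^{-1}(h'-h)$ in $\CC\id+CD(G)$, and since $(z\id-h')^{-1}\in\CC\id+CD_\infty(G)\subset\CC\id+CD(G)$ this yields $(z\id-h)^{-1}\in\CC\id+CD(G)$.

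The main obstacle is making $\norm[CD]{(z\id-h')^{-1}(h'-h)}\le\norm{(z\id-h')^{-1}}\,\norm[CD]{h'-h}<1$: although $\norm[CD]{h'-h}$ can be taken arbitrarily small, the norm of $(z\id-h')^{-1}$ in $\CC\id+CD(G)$ need not stay bounded as $h'\to h$, because $h$ lies outside $\CC\id+CD_\infty(G)$ and inversion in that smaller algebra is not $\norm[CD]{\cdot}$‑continuous; the trivial bound $\norm[op]{(z\id-h')^{-1}}\le|\operatorname{Im}z|^{-1}$ is of no use for the $CD$‑norm. To get around this I would exploit analyticity. By the symmetry of $CD_\infty(G)$, $z\mapsto(z\id-h')^{-1}$ is a $\bigl(\CC\id+CD(G)\bigr)$‑valued holomorphic function on $\CC\setminus\RR$; a geometric‑series estimate applied to $z\id-h'=z(\id-h'/z)$ gives $\norm{(z\id-h')^{-1}}\le\bigl(|z|-\norm[CD]{h}-1\bigr)^{-1}$ for $|z|>\norm[CD]{h}+1$, uniformly in $h'$ with $\norm[CD]{h'}\le\norm[CD]{h}+1$; and a maximum‑principle argument for $\bigl(\CC\id+CD(G)\bigr)$‑valued holomorphic functions — using the decay of these functions for large $|z|$ together with the control of their operator norms near $\RR$ and the $C^{\ast}$‑structure of $\overline{CD(G)}$ — should propagate this bound to a fixed neighbourhood of $\CC\setminus\RR$, independently of $h'$. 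This uniform resolvent estimate is the technical heart of the argument; with it in hand the perturbation step applies to $z\id-h$ for every $z\notin\RR$, giving $\sigma_{\CC\id+CD(G)}(h)\subset\RR$ and hence the symmetry of $CD(G)$. I expect precisely this step — transporting the resolvent regularity of $CD_\infty(G)$ across the non‑closed inclusion $CD_\infty(G)\subset CD(G)$ — to be the hard part.
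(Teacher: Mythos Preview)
Your proposal has a genuine gap, and the gap is exactly the step you flagged as ``the hard part''.  The uniform $CD$-norm bound on $(z\id-h')^{-1}$ that you need is never established.  Your suggested fix via a maximum principle for $(\CC\id+CD(G))$-valued holomorphic functions cannot work as written: a Phragm\'en--Lindel\"of type argument on $\CC\setminus\RR$ would require boundary control of the \emph{$CD$-norm} of the resolvent near $\RR$, and all you have there is the operator-norm bound $|\mathrm{Im}\,z|^{-1}$, which (i) blows up and (ii) is in the wrong norm.  There is no mechanism in your outline that converts operator-norm information into $CD$-norm information; the ``$C^\ast$-structure of $\overline{CD(G)}$'' does not help, since the inclusion $CD(G)\hookrightarrow\overline{CD(G)}$ is not bicontinuous.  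So the symmetry of $CD(G)$ is left unproved, and the rest of your plan depends on it.

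The point you are missing is that the uniform resolvent bound is \emph{unnecessary}.  The paper's proof (invoking Kurbatov) is a direct two-step factorisation that uses only Corollary~\ref{cor:invcl} and the fact that $CD_\infty$ is a dense two-sided ideal in $CD$.  Here is the argument.  Let $\id+T$ be invertible in $B(L^2(G))$ with $T\in CD$ (the case $\lambda=0$ is vacuous: if $T\in CD$ were invertible in $B$, an approximating $S\in CD_\infty$ would be invertible in $B$, hence in $\CC\id+CD_\infty$ by Corollary~\ref{cor:invcl}, which forces $\id\in CD_\infty$, impossible for non-discrete $G$).  Choose $S\in CD_\infty$ with $Q:=T-S$ satisfying $\norm[CD]{Q}<1$ and small enough that $\id+S$ is invertible in $B$; by Corollary~\ref{cor:invcl} write $(\id+S)^{-1}=\id+W$ with $W\in CD_\infty$.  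Then
\[
(\id+S)^{-1}(\id+T)=\id+Q+WQ .
\]
Now comes the step that replaces your resolvent estimate: since $\norm[CD]{Q}<1$, the Neumann series gives $(\id+Q)^{-1}\in\CC\id+CD$, and
\[
\id+Q+WQ=(\id+Q)\bigl(\id+(\id+Q)^{-1}WQ\bigr).
\]
Because $CD_\infty$ is a two-sided ideal in $CD$, $(\id+Q)^{-1}W\in CD_\infty$ and then $(\id+Q)^{-1}WQ\in CD_\infty$.  The factor $\id+(\id+Q)^{-1}WQ$ therefore lies in $\CC\id+CD_\infty$ and is invertible in $B$, so by Corollary~\ref{cor:invcl} its inverse is in $\CC\id+CD_\infty$.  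Multiplying out gives $(\id+T)^{-1}\in\CC\id+CD$.  No bound on $\norm[CD]{W}$ is ever used; the ideal property absorbs $W$ back into $CD_\infty$ so that Corollary~\ref{cor:invcl} can be applied a second time.  This is what the citation of \cite{Kurb01} is pointing to, and it makes your detour through symmetry, enveloping $C^\ast$-algebras, and holomorphic resolvent estimates superfluous.
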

\begin{proof}
If $G$ is discrete then $CD_{\infty}(G)=CD(G)$, so this case is covered
by Corollary~\ref{cor:invcl}.
In the non-discrete case:
$\CC \id +CD_{\infty}(G)$ is inverse-closed in $B(L^2(G))$, and $CD_{\infty}(G)$ 
is a
dense twosided ideal in  $CD(G)$. (See for instance \cite{Kurb01} for the argument.)
\end{proof}
\section{Groups with compact commutator}
Consider a locally compact group $G$ with compact
topological commutator subgroup $C=\overline{[G,G]}$.
We denote $p_C:G\to G/C$ the canonical projection.
$G/C$ is a locally compact abelian group, so
$G/C=\RR^d \times J$, where $d\in \NN\cup\{0\}$ and $J$ contains 
a compact open subgroup $K$.
Let $R\subset J$ be a complete set of representatives of $J/K$,
$U:= \left( p_C^{-1}([-\frac{1}{2},\frac{1}{2})^d\times K \right)$, 
and let $H\subset G$
be a complete set of representatives of $\ZZ^d\times R\subset G/C$.
Then $U$ is relatively compact, measurable and $H$-invariant.
Furthermore $\{hU\}_{h\in H}$ is a partition of $G$. The set $hU$
does not depend on the choice of the representatives
in $J$ and in $G$, so we may 
write $\dot{h}U$ for $hU$ with $\dot{h}\in D:=\ZZ^d\times J/K$.
\par
Every $h\in H$ is of the form $h=rep_C(m,r)$ where $rep_C$ denotes
a representative of $(m,r)\in G/C$, $m\in \ZZ^d$, and $r$ is a representative
$rep_K(rK)$ of $rK\in J/K$ in $J$.
So $hU=p_C^{-1}((m+[-\frac{1}{2},\frac{1}{2})^d)\times rK)$
which means that $hU$
only depends on $\dot{h}:=(m,rK)=(id\times p_K)(p_C(h))\in D$.
If $k=rep_C(m',r')$ and $\dot{h}=\dot{k}$, it follows that 
$m=m'$ and $rK=r'K$, hence $h=k$. So the map $h\mapsto \dot{h}$
is bijective from $H$ to $D$.
Note also that $(hk\dot{)}=\dot{h}\dot{k}$ and $(h^{-1}\dot{)}=(\dot{h})^{-1}$
(but unlike $D$ the set $H$ need not be a group).
\begin{proposition}\label{prop:abd}
The approximate block diagonals
$({hU,kU})_{hk^{-1}\in lU}$ of Definition~\ref{def:abd} 
are exactly the block diagonals 
$({\dot{h}U,\dot{k}U})_{\dot{h}\dot{k}^{-1}=\dot{l}}$~.
\end{proposition}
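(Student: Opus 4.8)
The plan is to reduce the asserted equality of unions to the single pointwise equivalence
\begin{equation*}
ij^{-1}\in lU \quad\Longleftrightarrow\quad \dot{i}\,\dot{j}^{-1}=\dot{l}\qquad(i,j,l\in H).
\end{equation*}
Since $hU=\dot{h}U$ for every $h\in H$ and $h\mapsto\dot{h}$ is a bijection from $H$ onto $D$, this equivalence identifies, for each fixed $l$, the index set $\{(i,j):ij^{-1}\in lU\}$ of the approximate block diagonal determined by $l$ with the index set $\{(\dot{i},\dot{j}):\dot{i}\dot{j}^{-1}=\dot{l}\}$ of the block diagonal determined by $\dot{l}$; hence the two unions $\bigcup_{ij^{-1}\in lU}iU\times jU$ and $\bigcup_{\dot{i}\dot{j}^{-1}=\dot{l}}\dot{i}U\times\dot{j}U$ coincide, and since $\dot{l}$ runs through all of $D$ as $l$ runs through $H$, the two families of diagonals match up exactly.

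To prove the equivalence I would first push the question down to the abelian quotient $G/C=\RR^d\times J$. Writing $V:=[-\tfrac{1}{2},\tfrac{1}{2})^d\times K$, so that $U=p_C^{-1}(V)$, one has $lU=l\,p_C^{-1}(V)=p_C^{-1}\big(p_C(l)V\big)$; thus $lU$ is $p_C$-saturated, and therefore
\begin{equation*}
ij^{-1}\in lU\iff p_C(i)\,p_C(j)^{-1}\in p_C(l)\,V .
\end{equation*}
Now write $p_C(i)=(a,\rho)$, $p_C(j)=(b,\sigma)$, $p_C(l)=(m,\tau)$ with $a,b,m\in\ZZ^d$ and $\rho,\sigma,\tau\in R$ (possible since $H$ is a transversal of $\ZZ^d\times R$). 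The membership on the right splits coordinatewise into $a-b\in m+[-\tfrac{1}{2},\tfrac{1}{2})^d$ and $\rho\sigma^{-1}\in\tau K$. The crucial arithmetic point is that $[-\tfrac{1}{2},\tfrac{1}{2})^d$ is an exact fundamental domain for $\ZZ^d$, that is $\ZZ^d\cap[-\tfrac{1}{2},\tfrac{1}{2})^d=\{0\}$; since $a-b-m\in\ZZ^d$, the first condition is equivalent to $a-b=m$. The second condition is equivalent to $(\rho K)(\sigma K)^{-1}=\tau K$ in $J/K$. By the definition $\dot{h}=(\mathrm{id}\times p_K)(p_C(h))$, these two conditions together say precisely $\dot{i}\,\dot{j}^{-1}=\big(a-b,(\rho K)(\sigma K)^{-1}\big)=(m,\tau K)=\dot{l}$, which is the equivalence sought.

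I do not anticipate a genuine obstacle; the argument is essentially bookkeeping. The two places that need a little care are the descent to $G/C$, which is legitimate because $U$, being a full $p_C$-preimage, is saturated, so membership in $lU$ is decided in the quotient; and the use of $[-\tfrac{1}{2},\tfrac{1}{2})^d$ as a genuine --- not merely approximate --- fundamental domain for $\ZZ^d$, which is exactly what upgrades the $\RR^d$-component of the relation ``$\in p_C(l)V$'' to the honest equality $a-b=m$. The $J$-component is immediate once $R$ is a complete set of representatives of $J/K$.
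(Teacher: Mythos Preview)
Your proof is correct and follows essentially the same route as the paper: both arguments reduce to the equivalence $ij^{-1}\in lU\iff\dot{i}\,\dot{j}^{-1}=\dot{l}$ and establish it via the key observation that $\ZZ^d\cap[-\tfrac{1}{2},\tfrac{1}{2})^d=\{0\}$. The only cosmetic difference is that the paper packages your two quotient maps $p_C$ and $\mathrm{id}\times p_K$ into a single homomorphism $G\to\RR^d\times J/K$ (the extension of $h\mapsto\dot{h}$ to all of $G$), then phrases the key step as $\dot{U}\cap D=\{0\}$ rather than working in coordinates; this is the same computation in slightly more compressed form.
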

\begin{proof}
\begin{enumerate}
\item[(a)]{$hk^{-1}\in lU$ implies $l^{-1}hk^{-1}\in U$.
The formula for $h\mapsto \dot{h}$ is meaningful on all of $G$
and defines an homomorphism onto $\RR^d\times J/K$.
Now $\dot{l}^{-1}\dot{h}\dot{k}^{-1}\in \dot{U}\cap D=\{0\}$
(where we denoted $\{\dot{u} : u\in U\}$ by $\dot{U}$). 
So  $\dot{h}\dot{k}^{-1}=\dot{l}$.}
\item[(b)]{If $\dot{h}\dot{k}^{-1}=\dot{l}$, then $hk^{-1}$ differs from  
$l$ by an element $w\in G$ with $\dot{w}=0$, 
i.e.\ $w\in p_c^{-1}(\{0\}\times K )$.
So $hk^{-1}=lw\in lU$.}
\end{enumerate}
\end{proof}
Corollary~\ref{cor:vm} and Proposition~\ref{prop:abd} now imply:
\begin{theorem}
If the (topological) commutator subgroup $\overline{[G,G]}$ is compact,
then $CD_{\infty}$ is bicontinuously $\ast$-isomorphic to the 
$\ast$-algebra of kernel-valued matrices 
\[CD_H=\left\{ (a^{\circ}_{hk})_{h,k \in H}\right\}=%
\left\{(a^{\circ}_{\dot{h}\dot{k}})_{\dot{h},\dot{k}\in D}\right\},\]
with norm
\[\norm{(a^{\circ}_{\dot{h}\dot{k}})_{\dot{h},\dot{k}\in D}}%
=\sum_{\dot{l}\in D}\sup_{\dot{h}\dot{k}^{-1}=\dot{l}}\norm[\infty]{a^{\circ}_{\dot{h}\dot{k}}},\]
the infinity norm being taken on $hU\times kU$ with product  Haar measure on it.
\end{theorem}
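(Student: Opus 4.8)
The plan is to obtain the statement as an immediate consequence of Corollary~\ref{cor:vm} and Proposition~\ref{prop:abd}, once the abstract hypotheses of Section~\ref{sec:matrices} are matched with the concrete data $H$, $U$, $D$ fixed at the beginning of the present section. So the first step is to check those hypotheses: in the paragraph preceding Proposition~\ref{prop:abd} it is recorded that $U=p_C^{-1}([-1/2,1/2)^d\times K)$ is a relatively compact, measurable neighbourhood of $\idg$, invariant under conjugation by the elements of $H$, and that $\{hU\}_{h\in H}$ is a partition of $G$. These are precisely the standing assumptions under which Corollary~\ref{cor:vm} was established, and that argument is stated for a general discrete subset $H$, so it never uses that $H$ is a subgroup. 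Hence Corollary~\ref{cor:vm} applies verbatim and yields a bicontinuous $\ast$-isomorphism of $CD_{\infty}$ onto $CD_H=\{(a^{\circ}_{hk})_{h,k\in H}\}$ equipped with the norm $\sum_{l\in H}\sup_{hk^{-1}\in lU}\norm[\infty]{a^{\circ}_{hk}}$.

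The second step is a change of index set. By the discussion before Proposition~\ref{prop:abd}, $h\mapsto \dot h=(id\times p_K)(p_C(h))$ is a bijection of $H$ onto $D=\ZZ^d\times J/K$ satisfying $(hk\dot{)}=\dot h\dot k$ and $(h^{-1}\dot{)}=(\dot h)^{-1}$, and $hU$ depends on $h$ only through $\dot h$; consequently each block $a^{\circ}_{hk}$, and its infinity norm on $hU\times kU$ with respect to the product Haar measure, may legitimately be regarded as indexed by the pair $(\dot h,\dot k)\in D\times D$. Proposition~\ref{prop:abd} identifies, for every $l\in H$, the approximate block diagonal $\{(hU,kU):hk^{-1}\in lU\}$ with the block diagonal $\{(\dot hU,\dot kU):\dot h\dot k^{-1}=\dot l\}$. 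Feeding this identification into the norm of Corollary~\ref{cor:vm} and re-indexing the outer sum over $\dot l\in D$ and the inner supremum over $\{\dot h\dot k^{-1}=\dot l\}$ transforms $\sum_{l\in H}\sup_{hk^{-1}\in lU}\norm[\infty]{a^{\circ}_{hk}}$ into $\sum_{\dot l\in D}\sup_{\dot h\dot k^{-1}=\dot l}\norm[\infty]{a^{\circ}_{\dot h\dot k}}$, word for word. Composing the isomorphism of the first step with this relabelling gives the theorem.

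I do not expect a genuine obstacle here; the whole content is organisational. The one point deserving care is that the index set $H$ is in general not a group, so the reduction to the abelian group $D$ must proceed through the bijection $h\mapsto\dot h$, and it is exactly Proposition~\ref{prop:abd} that guarantees ``approximate block diagonal determined by $l$'' and ``block diagonal determined by $\dot l$'' to denote the same subset of $G\times G$ --- which is what makes the two presentations of $CD_H$, and their two norms, literally coincide. Everything else, in particular the two-sided norm estimates expressing bicontinuity, is inherited unchanged from Corollary~\ref{cor:vm}.
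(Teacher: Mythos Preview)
Your proposal is correct and follows precisely the paper's own route: the theorem is stated immediately after the sentence ``Corollary~\ref{cor:vm} and Proposition~\ref{prop:abd} now imply:'', so the authors' proof is exactly the two-step reduction you spell out---apply Corollary~\ref{cor:vm} with the concrete $H,U$ of this section, then use Proposition~\ref{prop:abd} and the bijection $h\mapsto\dot h$ to rewrite the approximate-block-diagonal norm over $H$ as the block-diagonal norm over $D$. Your only addition is to make explicit that Corollary~\ref{cor:vm} does not require $H$ to be a subgroup, which is a useful observation and entirely in keeping with the paper's intent.
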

Like in the beginning of section~\ref{sec:matrices}
we have 
$L^2(G)=\oplus_{h\in H}L^2(hU)=\oplus_{h\in H}L^2(U)$
with canonical isomorphisms, and as we just have seen we may replace 
the labelling $h\in H$ by $\dot{h}\in \dot{H}=D$.
Although $D=\ZZ^d\times J/K$ is not a subgroup of $G$, in general, it acts on 
$ L^2(G)=\oplus_{\dot{h}\in D}L^2(U)$ by permuting the $L^2(U)$ blocks.
Denoting
the action of $\dot{l}\in D$ by $\lambda_{\dot{l}}$ we have 
$\lambda_{\dot{l}}(f_{\dot{h}})_{\dot{h}\in D}=(f_{\dot{l}^{-1}\dot{h}})_{\dot{h} \in D}$
for $(f_{\dot{h}})_{\dot{h}\in D}\in \oplus_{\dot{h}\in D}L^2(U)$.
At the level of $L^2(G)=\oplus_{\dot{h}\in H}L^2(hU)$
we would have 
$\lambda_{\dot{l}}g_h=\lambda_{r(\dot{l}\dot{h})}\lambda_{h^{-1}}g_h=\lambda_{r(\dot{l}\dot{h})h^{-1}}g_h$ 
for $g_h\in L^2(hU)$, where $r(\dot{l}\dot{h})$ 
denotes the unique $k\in H$ with $\dot{k}=\dot{l}\dot{h}$.
This is a consequence of respecting our identifications.
Note that $\lambda_{\dot{l}}$ may be different from $\lambda_{l}$.
\par
We also define the action $T$ of $D=\dot{H}$ on
$\mathcal{A}=l^{\infty}(H,L^{\infty}(U\times U))=$\linebreak[4] $l^{\infty}(D,L^{\infty}(U\times U))$
by $T_{\dot{k}}f(\dot{h})=f(\dot{k}^{-1}\dot{h})$ for $f\in \mathcal{A}$ and
$\dot{h},\dot{k}\in D$.
For $m\in \mathcal{A}$, the multiplication operator $M_m$ on
$\oplus_{\dot{h}\in D}L^2(U\times U)$
is defined by
$(M_mg)(\dot{h})=m(\dot{h})\ast g(\dot{h})$
for $\dot{h}\in D, g\in \oplus_{\dot{h}\in D}L^2(U)$
where $\ast$ means convolution of kernels in $L^{\infty}(U\times U)$.
Like in the case where $H$ is a subgroup we have 
$\lambda_{\dot{k}}M_m\lambda_{\dot{k}^{-1}}=M_{T_{\dot{k}}m}$.
\par
Letting $\mathcal{L}=l^1(D,\mathcal{A},T)$ denote the twisted
$L^1$-algebra like in the beginning of section~\ref{sec:subgroup}
(replace $H$ and its elements by $D$ and the respective elements),
we may repeat the proof of Theorem~\ref{thm:CD}
to obtain
\begin{theorem}
The Banach $\ast$-algebra $CD_H$ is
isometrically $\ast$-isomorphic to
$\mathcal{L}=l^1(D,l^{\infty}(D,L^{\infty}(U\times U)),T)$.
As a result $CD_{\infty}$ is bicontinuously $\ast$-isomorphic to $\mathcal{L}$.
\end{theorem}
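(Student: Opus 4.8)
The plan is to transcribe the proof of Theorem~\ref{thm:CD}, replacing the subgroup $H$ and its left translations $\lambda_i$ throughout by the quotient group $D=\ZZ^d\times J/K$ and the block-permutation operators $\lambda_{\dot l}$, $\dot l\in D$, introduced above. First I would define a representation $R$ of $\mathcal{L}=l^1(D,\mathcal{A},T)$ on $L^2(G)=\bigoplus_{\dot h\in D}L^2(U)$ by setting, for the $\mathcal{A}$-valued Dirac function $\delta^m_{\dot h}$ which takes the value $m\in l^{\infty}(D,L^{\infty}(U\times U))$ at $\dot h$ and vanishes elsewhere, $R\delta^m_{\dot h}=\lambda_{\dot h}\circ M_m$, and then extend by linearity and continuity. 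Applying both sides to the elementary vectors $(\delta_{\dot i,\dot k}\xi)_{\dot i\in D}$, $\xi\in L^2(U)$, $\dot k\in D$, and using that $\lambda_{\dot h}$ carries the $\dot k$-block of $L^2(G)$ onto its $\dot h\dot k$-block, one sees --- exactly as in Theorem~\ref{thm:CD} --- that $R\delta^m_{\dot h}$ is the kernel-valued matrix in $CD_H$ supported on the block diagonal $\dot i\dot j^{-1}=\dot h$ with entry $m(\dot j)$ in position $(\dot h\dot j,\dot j)$. Since by the preceding theorem the $CD_H$-norm of such a diagonal matrix is $\sup_{\dot j\in D}\norm[\infty]{m(\dot j)}=\norm[\mathcal{A}]{m}=\norm[\mathcal{L}]{\delta^m_{\dot h}}$, the map $R$ is isometric on Dirac functions, hence extends to an isometric linear bijection of $\mathcal{L}$ onto $CD_H$.

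Next I would check that $R$ is a $\ast$-homomorphism by repeating the two short computations of Theorem~\ref{thm:CD}. Using $\lambda_{\dot h}\lambda_{\dot k}=\lambda_{\dot h\dot k}$ together with the covariance relation $\lambda_{\dot k}M_m\lambda_{\dot k}^{-1}=M_{T_{\dot k}m}$ recorded just before the statement, one obtains
\begin{equation*}
R\delta^m_{\dot h}\,R\delta^n_{\dot k}=\lambda_{\dot h}M_m\lambda_{\dot k}M_n
=\lambda_{\dot h\dot k}M_{(T^{-1}_{\dot k}m)n}
=R\delta^{(T^{-1}_{\dot k}m)n}_{\dot h\dot k}=R(\delta^m_{\dot h}\ast\delta^n_{\dot k}),
\end{equation*}
and likewise $(R\delta^m_{\dot h})^{\ast}=M_{m^{\ast}}\lambda_{\dot h}^{-1}=\lambda_{\dot h}^{-1}M_{T_{\dot h}m^{\ast}}=R(\delta^{T_{\dot h}m^{\ast}}_{\dot h^{-1}})=R((\delta^m_{\dot h})^{\ast})$. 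Since the span of the Dirac functions is dense in $\mathcal{L}$, it follows that $R$ is an isometric $\ast$-isomorphism of $\mathcal{L}$ onto $CD_H$. Composing with the bicontinuous $\ast$-isomorphism $CD_{\infty}\cong CD_H$ of the preceding theorem then gives the stated bicontinuous $\ast$-isomorphism $CD_{\infty}\cong\mathcal{L}$.

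The only point requiring attention --- and the reason this is not word for word the proof of Theorem~\ref{thm:CD} --- is that $H$ is here merely a set of representatives while $D$ is the genuine group, so the operators $\lambda_{\dot l}$ are not the left translations $\lambda_l$ but the block-permutation operators, which at the level of $\bigoplus_{h\in H}L^2(hU)$ act by $g_h\mapsto\lambda_{r(\dot l\dot h)h^{-1}}g_h$. Thus I would first want the facts, already noted above, that $\dot l\mapsto\lambda_{\dot l}$ is a unitary representation of $D$ on $\bigoplus_{\dot h\in D}L^2(U)$ and that it satisfies the covariance relation with $M$ and $T$; once these are in hand, every step of the argument goes through verbatim. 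This bookkeeping about $\lambda_{\dot l}$ versus $\lambda_l$ is essentially the only obstacle --- the remainder is a straight transcription of Theorem~\ref{thm:CD}.
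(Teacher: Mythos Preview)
Your proposal is correct and is exactly the approach the paper takes: the paper's proof consists of a single sentence instructing the reader to repeat the proof of Theorem~\ref{thm:CD} with $D$ in place of $H$, using the block-permutation operators $\lambda_{\dot l}$ and the covariance relation $\lambda_{\dot k}M_m\lambda_{\dot k}^{-1}=M_{T_{\dot k}m}$ already recorded just before the statement. Your write-up simply carries this out in detail, including the one genuine point of care you flag --- that $\lambda_{\dot l}$ is not $\lambda_l$ but the block permutation --- which is precisely what the paper sets up in the paragraph preceding the theorem.
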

Continuing as in section~\ref{sec:subgroup} with $D$ in place of $H$
we obtain the analoga of Corollary~\ref{cor:invcl} and Theorem~\ref{thm:invcl}:
\begin{corollary}
$\CC \id + CD_{\infty}(G)$ is inverse-closed in $B(L^2(G))$.
\end{corollary}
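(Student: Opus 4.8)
The plan is to run the entire machinery of Section~\ref{sec:subgroup} once more, but with the abelian group $D=\ZZ^d\times J/K$ in place of the subgroup $H$. The point that makes this legitimate is that $D$, being a discrete abelian group, is automatically amenable and rigidly symmetric, so the two standing hypotheses that had to be imposed by hand in Section~\ref{sec:subgroup} are now available for free. Moreover, although $D$ is not literally a subgroup of $G$, all the algebras involved — $\mathcal{A}=l^{\infty}(D,L^{\infty}(U\times U))$, the twisted $L^1$-algebra $\mathcal{L}=l^1(D,\mathcal{A},T)$, its $B(L^2(U))$-valued companion $\mathcal{B}=l^1(D,l^{\infty}(D,B(L^2(U))),T)$, and the representation $R$ — are described purely in terms of $D$ as an abstract group together with the shift action $T$ and the block-permutation action $\lambda_{\dot{l}}$ of $D$ on $L^2(G)=\oplus_{\dot h\in D}L^2(U)$, so every formula from Section~\ref{sec:subgroup} transfers verbatim.

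First I would re-establish that the closure of $R(\mathcal{B})$ in $B(L^2(G))$ is its $C^{\ast}$-hull $C^{\ast}(\mathcal{B})$: since $D$ is amenable and $l^{\infty}(D,B(L^2(U)))$ is a $C^{\ast}$-algebra, Leptin \cite{lep68} gives that the $M$-regular representation of $\mathcal{B}$ is maximal, and the unitary $S\xi(x,z)=\xi(xz,z)$ (now for $x,z\in D$) intertwines the extension $R^{\omega}$ with it exactly as in Section~\ref{sec:subgroup}, so $R$ is maximal too. Next, writing $R(f)$ as an $l^1$-sum of its diagonals embeds $\mathcal{B}$ isometrically into $l^1(D,B(L^2(G)))\tilde=l^1(D)\projtensor B(L^2(G))$, which is symmetric because $D$ is rigidly symmetric; hence $R(\mathcal{B})$ is a closed symmetric $\ast$-subalgebra of a $C^{\ast}$-algebra, and together with the previous step this yields that $R(\mathcal{B})$ is spectral in $B(L^2(G))$. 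Finally, Lemma~\ref{lem:K2} presents $\mathcal{L}$ as a right ideal in $l^1(D,l^{\infty}(D,K_{2,\infty}))$, which is a left ideal in $l^1(D,l^{\infty}(D,L^2(U\times U)))$, which is a two-sided ideal in $\mathcal{B}\tilde=R(\mathcal{B})$; since one-sided ideals are spectral subalgebras and spectrality is transitive, $CD_{\infty}(G)\tilde=R(\mathcal{L})\tilde=\mathcal{L}$ is spectral in $B(L^2(G))$.

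The Corollary then follows exactly as Corollary~\ref{cor:invcl} did: if $G$ is discrete the claim is contained in \cite{fgl08} (with $\CC\id$ redundant, since $\id\in CD_{\infty}=CD$); if $G$ is non-discrete, then $CD_{\infty}(G)$ has no identity by Remark~\ref{rem:identity}, while it is a spectral subalgebra of $B(L^2(G))$, which does carry the identity $\id$, and adjoining $\CC\id$ to a non-unital spectral subalgebra preserves inverse-closedness.

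I expect the only genuine point to check is that the abstract constructions of Section~\ref{sec:subgroup} — the intertwiner $S$, the isometric diagonal embedding of $\mathcal{B}$, and the ideal tower coming from Lemma~\ref{lem:K2} — still go through when the honest left translations by $H\subset G$ are replaced by the block permutations $\lambda_{\dot{l}}$ arising from the quotient group $D=\ZZ^d\times J/K$. Since every step there was phrased in terms of a discrete group acting on $\oplus L^2(U)$, and since rigid symmetry and amenability of $D$ (both automatic for discrete abelian groups) supply the two nontrivial inputs, this is routine bookkeeping rather than a new difficulty.
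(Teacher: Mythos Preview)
Your proposal is correct and follows essentially the same approach as the paper, which disposes of this Corollary in a single sentence by saying ``Continuing as in section~\ref{sec:subgroup} with $D$ in place of $H$ we obtain the analoga of Corollary~\ref{cor:invcl} and Theorem~\ref{thm:invcl}.'' You have simply spelled out in detail what the paper leaves implicit: that $D$, being discrete abelian, is automatically amenable and rigidly symmetric, and that the constructions of Section~\ref{sec:subgroup} depend only on $D$ as an abstract group acting by block permutations on $\oplus_{\dot h\in D}L^2(U)$ rather than on $H$ being a literal subgroup of $G$.
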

\begin{theorem}
$\CC \id + CD(G)$ is inverse-closed in $B(L^2(G))$.
\end{theorem}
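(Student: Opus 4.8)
The plan is to mirror exactly the argument already carried out in Section~\ref{sec:subgroup} for the case of a discrete subgroup $H$, now replacing $H$ by the quotient-type index set $D = \ZZ^d\times J/K$, which \emph{is} a genuine group. First I would record the two structural facts that make this substitution legitimate: $D$ is abelian (a closed subgroup of the locally compact abelian group $G/C = \RR^d\times J$, up to the obvious identifications), hence amenable; and $D$, being abelian and discrete, is rigidly symmetric (for instance because abelian groups are rigidly symmetric, this being one of the classical cases). These are precisely the hypotheses that the arguments leading to the spectrality of $R(\mathcal{B})$ in $B(L^2(G))$ used about $H$, so with $D$ in their place every step goes through verbatim.

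Next I would invoke the chain of isomorphisms already established: by the preceding theorem $CD_\infty(G)$ is bicontinuously $\ast$-isomorphic to $\mathcal{L} = l^1(D, l^\infty(D, L^\infty(U\times U)), T)$, and by the proof of the proposition in Section~\ref{sec:subgroup} (with $D$ replacing $H$ throughout) $\mathcal{L} \cong R(\mathcal{L})$ sits as a two-sided ideal in $\mathcal{B} = l^1(D, l^\infty(D, B(L^2(U))), T)$ via the ideal tower supplied by Lemma~\ref{lem:K2}, while $R(\mathcal{B})$ is spectral in $B(L^2(G))$. One-sided (in particular two-sided) ideals are spectral subalgebras, so spectrality passes down the tower and $CD_\infty(G)$ is spectral in $B(L^2(G))$. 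Since for nondiscrete $G$ the algebra $CD_\infty(G)$ has no identity whereas $B(L^2(G))$ does, spectrality upgrades to inverse-closedness of $\CC\id + CD_\infty(G)$ in $B(L^2(G))$, exactly as in Corollary~\ref{cor:invcl}. Finally, because $CD_\infty(G)$ is a dense two-sided ideal in $CD(G)$ (Proposition on $CD_\infty$ being a dense ideal), the standard ideal argument (as in \cite{Kurb01}, and spelled out in the proof of Theorem~\ref{thm:invcl}) promotes inverse-closedness from $\CC\id + CD_\infty(G)$ to $\CC\id + CD(G)$. When $G$ is discrete the result is already contained in \cite{fgl08} (and $\CC\id$ is superfluous since $\id\in CD(G) = CD_\infty(G)$), so this completes the proof.

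The one point that genuinely needs care — and that I would single out as the main obstacle — is verifying that the representation-theoretic machinery of Section~\ref{sec:subgroup} truly only ever used that $H$ was a discrete, amenable, rigidly symmetric \emph{group}, and never that it was literally a subgroup of $G$ sitting inside $G$ in a multiplicative way. Concretely: the operators $\lambda_{\dot l}$ for $\dot l\in D$ are the block-permutation operators on $\oplus_{\dot h\in D}L^2(U)$, and one must check they satisfy $\lambda_{\dot k}\lambda_{\dot l} = \lambda_{\dot k\dot l}$ and $\lambda_{\dot k} M_m \lambda_{\dot k^{-1}} = M_{T_{\dot k}m}$ — which they do, since $D$ acts on the index set $D$ by left translation, a genuine group action, and this is all the proof of Theorem~\ref{thm:CD} and of the spectrality proposition actually exploited. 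The fact that $\lambda_{\dot l}$ need not agree with $\lambda_l$ (the left regular representation evaluated at a chosen representative $l\in H$) is irrelevant, because $CD_H$ was characterized intrinsically, as kernel-valued matrices with the stated block-diagonal summability norm, via Corollary~\ref{cor:vm} and Proposition~\ref{prop:abd}, independently of any multiplicative structure on $H$. Once this bookkeeping is checked, no new analysis is required: the proof is a transcription of Section~\ref{sec:subgroup} with $H\rightsquigarrow D$.
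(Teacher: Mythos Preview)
Your proposal is correct and follows exactly the paper's approach: the paper simply says ``Continuing as in section~\ref{sec:subgroup} with $D$ in place of $H$ we obtain the analoga of Corollary~\ref{cor:invcl} and Theorem~\ref{thm:invcl},'' and you have spelled out precisely that transcription, including the key observation that $D$ (being discrete abelian) is amenable and rigidly symmetric, and the careful check that the machinery of Section~\ref{sec:subgroup} only uses the group structure of the index set $D$ acting by block permutations, not any embedding of $D$ into $G$. One small imprecision: $D=\ZZ^d\times J/K$ is not literally a subgroup of $G/C=\RR^d\times J$ but rather a quotient of one; this does not affect anything, since what you actually use is only that $D$ is discrete and abelian.
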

\section{Appendix}
Here we shall derive the amenability of a locally compact group 
$G$ satisfying our
assumption (1).
\begin{lemma}
If $K\subset G$ is compact, then it can be covered by finitely
many sets $hU$ with $h\in H$.
\end{lemma}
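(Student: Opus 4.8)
The plan is to reduce the statement to Lemma~\ref{lem:overlap}. Because $\{hU\}_{h\in H}$ is a partition of $G$, any two distinct $h,h'\in H$ satisfy $hU\cap h'U=\emptyset$, so the hypothesis of Lemma~\ref{lem:overlap} is in force. I would then apply that lemma with $z=\idg$, with $L:=U$, and with the given compact set $K$ playing the role of ``$K$'' there: it yields that the number of $h\in H$ for which $hU\cap K\neq\emptyset$ is at most $\abs{\overline{KU^{-1}U}}/\abs{U}$.

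It then remains only to check that this bound is finite. Since $K$ is compact and $U$ is relatively compact, the set $K\overline{U}^{-1}\overline{U}$ is compact (the image of a product of compacta under inversion and multiplication), hence so is its closed subset $\overline{KU^{-1}U}$, which therefore has finite Haar measure; while $\abs{U}>0$ because $U$ is a neighbourhood of $\idg$. Consequently only finitely many of the sets $hU$, $h\in H$, meet $K$, and since $\{hU\}_{h\in H}$ covers $G\supseteq K$, these finitely many sets already cover $K$.

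I do not expect a genuine obstacle here: the assertion is immediate once Lemma~\ref{lem:overlap} is available, and the only points that deserve a word are the relative compactness of $\overline{KU^{-1}U}$ (hence its finite measure) and the positivity of $\abs{U}$. If one preferred to avoid invoking Lemma~\ref{lem:overlap}, an equally short direct route is available: $hU\cap K\neq\emptyset$ forces $h\in KU^{-1}\subseteq\overline{KU^{-1}}$, and a discrete subgroup of a locally compact group is closed, so $H\cap\overline{KU^{-1}}$ is a closed discrete subset of a compact set and hence finite.
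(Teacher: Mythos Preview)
Your argument is correct, but it differs from the paper's. The paper does not invoke Lemma~\ref{lem:overlap}; instead it argues by contradiction via nets: assuming $K$ meets infinitely many $h_nU$, one picks $k_n=h_nu_n\in K\cap h_nU$, passes to subnets with $k_{\mu'}\to k$ and $u_{\mu'}\to\overline{u}\in\overline{U}$, and concludes $h_{\mu'}=k_{\mu'}u_{\mu'}^{-1}\to k\overline{u}^{-1}$, which contradicts the uniform discreteness of $H$ (derived from the disjointness of the $hU$). Your route is shorter and makes efficient reuse of Lemma~\ref{lem:overlap}, turning the question into a measure bound; the paper's route is purely topological and self-contained, needing no Haar-measure estimate. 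Your alternative argument (that $hU\cap K\neq\emptyset$ forces $h\in KU^{-1}$, and $H\cap\overline{KU^{-1}}$ is compact and discrete) is essentially the paper's idea stripped of the net language, and in the present setting (where $H$ is a subgroup under assumption~(1)) the closedness of $H$ you use is indeed equivalent to the ``nets in $H$ cannot converge outside $H$'' step in the paper.
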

\begin{proof}
\begin{enumerate}
\item[(i)]{First note that, due to our assumptions, $H$ is not only discrete but
uniformly discrete. This implies that a net $(h_{\mu})$ in 
$H$ cannot converge to a point outside $H$, and if $h_{\mu}\to h\in H$,
then $h_{\mu}=h$
from some index $\mu_0$ onwards.}
\item[(ii)]{Suppose $K\subset G$ is compact and meets infinitely many sets 
$h_nU$, $h_n\in H$, $n\in \NN$, and $h_n\not= h_m$ for $n\not= m$.
For $n\in \NN$ choose $k_n\in K\cap h_nU$, so $k_n=h_nu_n$ with $u_n\in U$.
Choose a convergent subnet $(k_{\mu})$ of $(k_n)_{n\in \NN}$, $k_{\mu}\to k\in K$
say. Choose a subnet $(k_{\mu'})$ of $(k_{\mu})$ such that $k_{\mu'}=h_{\mu'}u_{\mu'}$, with $u_{\mu'}\to \overline{u}\in \overline{U}$. Then $h_{\mu'}=k_{\mu'}u_{\mu'}^{-1}\to k\overline{u}^{-1}$. This contradicts the fact that $h_{\mu'})$, being a subnet of $(h_n)_{n\in \NN}$, cannot converge (according to (i)).
}
\end{enumerate}
\end{proof}
Now we show that $G$ satisfies the F\o lner condition
(see \cite[3.6]{green69}). 
For a compact $K\subset G$  let 
$K$ be covered by finitely many
$h_iU,\; i=1,\ldots ,M$, and let the compact set
$\overline{U^2} \cup \overline{U^{-1}U}$
be covered by $k_jU,\;j=1,\ldots ,L$.
Let $\epsilon>0$ be arbitrary.
Since $H$ satisfies the F\o lner condition, for 
$\cup_{i,j}\{k_jh_i,\, k_jh_i^{-1}\}$
 there exists a nontrivial finite set $E\subset H$, such that
$\abs{k_jh_iE \Delta E}\leq \frac{\varepsilon}{L} \abs{E}$ 
and $\abs{k_jh_i^{-1}E \Delta E}\leq \frac{\varepsilon}{L} \abs{E}$
for $i=1,\ldots ,M$,
$j=1,\ldots, L$. (On the discrete group $H$ the Haar measure coincides
with the counting measure, thus for $E\subset H$ 
the cardinality of $E$ is $\abs{E}$.) 
Then for $x\in K$, $x=h_iu$, say
\begin{eqnarray*}
\abs{h_iuUE\setminus UE}&\leq&\abs{h_iU^2E\setminus UE}\;=\;%
\abs{U^2h_iE\setminus UE}\\
&\leq&\abs{\bigcup_{j=1}^Lk_jUh_iE \setminus UE}\;=\;%
\abs{U}\abs{\bigcup_{j=1}^Lk_jh_iE\setminus E}\\
&\leq& \abs{U}\sum_{j=1}^L\abs{k_jh_iE\setminus E}\;\leq\;%
\abs{U}\varepsilon\abs{E}\;=\;\varepsilon\abs{UE}.
\end{eqnarray*}
Similarly
\begin{eqnarray*}
\abs{UE\setminus h_iuUE}&\leq&\abs{u^{-1}h_{i}^{-1}UE \setminus UE}\;\leq\;%
\abs{U^{-1}Uh_{i}^{-1}E \setminus UE}\\
&\leq&\abs{\bigcup_{j=1}^Lk_jUh_{i}^{-1}E \setminus UE}\;\leq\;%
\abs{U}\sum_{j=1}^L\abs{k_jh_{i}^{-1}E \setminus E}\\
&\leq&\abs{U}\varepsilon\abs{E}\;=\;\varepsilon\abs{UE}
\end{eqnarray*}
So we obtain, for $x=h_iu\in K$
\begin{eqnarray*}
\abs{xUE\Delta UE}&=&\abs{\{h_iuUE\setminus UE\}\cup \{UE\setminus h_iuUE\}}\\%
&\leq&\abs{h_iuUE\setminus UE}+\abs{UE\setminus h_iuUE}\;\leq\;%
2\varepsilon\abs{UE}.
\end{eqnarray*}
\section{Acknowledgements}
We thank Karlheinz~Gr\"ochenig for his careful 
reading
of a preliminary version of this note, and Karl~H.~Hofmann for a 
discussion on the structure of the groups we consider.
We thank Michael~Cowling for a fruitful discussion on
the amenability of these groups.
\bibliographystyle{amsplain}

\end{document}